\documentclass[final,12pt]{amsart}
\usepackage{amssymb, amsmath, amsthm,geometry}

\usepackage{mathrsfs}  
\usepackage{enumerate}
\usepackage{cite}

\usepackage{xcolor}
\usepackage{tikz}
\usetikzlibrary{arrows,shapes,chains}
\usetikzlibrary{arrows.meta}

\usepackage[all,cmtip, color]{xy}

\numberwithin{equation}{section}

\usepackage{graphicx}
\usepackage[colorlinks=true, citecolor=blue, linkcolor=blue, urlcolor=blue]{hyperref}
\newgeometry{asymmetric, centering}
 \usepackage{xcolor}
 
\usepackage{tikz}
\usetikzlibrary{shapes.geometric}
\usepackage{ifdraft}
\ifoptionfinal{
\usepackage[disable]{todonotes}
}{
\usepackage[norefs, nocites]{refcheck}
\usepackage{graphicx}
\usepackage{epstopdf}

\usepackage[notref, notcite]{showkeys}
\usepackage[bordercolor=white, color=white]{todonotes}
}

\makeatletter\providecommand\@dotsep{5}\def\listtodoname{List of Todos}\def\listoftodos{\hypersetup{linkcolor=black}\@starttoc{tdo}\listtodoname\hypersetup{linkcolor=blue}}\makeatother


\newtheorem{lemma}{Lemma}[section]
\newtheorem{proposition}{Proposition}[section]
\newtheorem{theorem}{Theorem}[section]

\newtheorem{definition}{Definition}[section]

\newtheorem{issue}{Issue}

\theoremstyle{remark}

\newcommand{\bel}{\begin{equation} \label}
\newcommand{\ee}{\end{equation}}
\def\beq{\begin{equation}}
\def\eeq{\end{equation}}
\newcommand{\bea}{\begin{eqnarray}}
\newcommand{\eea}{\end{eqnarray}}
\newcommand{\beas}{\begin{eqnarray*}}
\newcommand{\eeas}{\end{eqnarray*}}

\renewcommand{\div}{\mathrm{div}\,}  


\def\A{\mathcal A}

\def\C{\mathcal C}

\def\R{\mathbb R}

\def\P{\mathcal P}

\def\M{\mathbb M}

\def\WF{\mathrm{WF}}

\def\out{\mathrm{out}}

\def\b{\backslash}
\def \into{\mathrm{in}}

\def \rem{\mathrm{rem}}

\def \id{\mathrm{id}}

\def \fre{\mathrm{fre}}

\def\l{\left}
\def\r{\right}

\def\D{\mathbb D}

\renewcommand{\leq}{\leqslant}
\renewcommand{\geq}{\geqslant}
\def \e{\varepsilon}

\def \c{\boldsymbol}

\def\p{\partial}
\newcommand\rotom{\mho}



\date{}
\title
[Stable inversion of potential]{Stable inversion of potential in nonlinear wave equations with cubic nonlinearity}

\author[X. Chen]{Xi Chen}
\address{Shanghai Center for Mathematical Sciences, Fudan University, Shanghai 200438, China. }\email{xi\_chen@fudan.edu.cn}
\author[S. Lu]{Shuai Lu}
\address{School of Mathematical Sciences, Fudan University, Shanghai 200433, China. }
\email{slu@fudan.edu.cn}
\author[R. Zhang]{Ruochong Zhang}
\address{School of Mathematical Sciences, Fudan University, Shanghai 200433, China. }
\email{22110180055@m.fudan.edu.cn}

\begin{document}

\maketitle

\begin{abstract} This paper investigates inverse potential problems of wave equations with cubic nonlinearity. We develop a methodology for establishing stability estimates for inversion of lower order coefficients. The new ingredients of our approach include trilinear approximations of nonlinear response operators, symbol estimates of distorted plane waves, and lower order symbol calculus. 
\end{abstract}

\tableofcontents

\section{Introduction}\label{sec: introduction}

Consider Minkowski spacetime $\M$ with signature $(3, 1)$ and D'alembertian operator $\Box$. We adopt Cartesian coordinates $x = (x_0, x_1, x_2, x_3) = (t, x')$ such that
\begin{align*}
	\M &= \left\{(x_0, x_1, x_2, x_3) \in \R^4\right\} =  \left\{(t, x') \in \R \times \R^3\right\}; \\ 
	 \Box &= \partial_{x_0}^2 - \sum_{j=1}^3 \partial_{x_j}^2  = \partial_t^2 - \partial_{x'}^2.\end{align*}
Suppose  $V \in C^\infty(\M)$ is a real potential and $h \in C^\infty(\M)$ is a non-vanishing variable coefficient on $\M$. We consider a sourced semilinear wave equation \begin{equation}\label{eqn : semilinear model}
\left\{\begin{aligned}\Box u + Vu + h u^3 &= f && t > 0;\\ u &\equiv 0 && t \leq 0.\end{aligned}\right.
\end{equation}  Then \eqref{eqn : semilinear model}   admits a unique solution in finite times as long as the source $f$ is sufficiently small and regular.

In the present paper, we shall investigate inverse potential problems of \eqref{eqn : semilinear model} in the following model. 

 Denote by $\mathcal{O}$ the ball centred at $0 \in \R^3$ with radius $\rho > 0$. For $T > 0$, we set a cylinder $\mho$  
\begin{align*} 
	\mho &:=  (0, T) \times \mathcal{O} = \left\{ (t, x') \in (0, T) \times \R^3 : |x'| < \rho  \right\}  \end{align*}
and a causal diamond   $\D$
\begin{align*}  
	\D &:= \left\{ (t, x') \in (0, T) \times \R^3 : |x'| \leq t+ \rho, |x'| \leq \rho + T -t  \right\}.\end{align*}
Moreover, we take $\mathcal{O}_i, i = 1, 2$ to be a ball centred at $0 \in \R^3$ with radius $\rho_i > 0$ such that $\rho > \rho_1 > \rho_2$. For small $t_2>t_1>0$,  denote smaller cylinders and diamonds
\begin{align*} 
	\rotom_i &:= (t_i,T-t_i)\times \mathcal{O}_i = \left\{ (t, x') \in (t_i, T - t_i) \times \R^3 : |x'| < \rho_i  \right\}  ,\\
	\mathbb{D}_i &:= \left\{ (t, x') \in (t_i,T-t_i) \times \R^3 : |x'| \leq t-t_i+ \rho_i  , |x'| \leq \rho_i + T-t_i -t  \right\}  .
\end{align*} 
See Figure \ref{fig:figure1} and Figure \ref{fig:figure2}.

\tikzset{every picture/.style={line width=0.75pt}} 

\begin{figure}[htbp]
	\centering
	\begin{minipage}{0.45\textwidth}
		\centering
		\begin{tikzpicture}[x=0.75pt,y=0.75pt,yscale=-1,scale=0.8]
			
			\draw   (180.21,43.81) -- (180.21,269.35) .. controls (180.21,275.98) and (162.31,281.35) .. (140.22,281.35) .. controls (118.14,281.35) and (100.23,275.98) .. (100.23,269.35) -- (100.23,43.81) .. controls (100.23,37.18) and (118.14,31.81) .. (140.22,31.81) .. controls (162.31,31.81) and (180.21,37.18) .. (180.21,43.81) .. controls (180.21,50.44) and (162.31,55.81) .. (140.22,55.81) .. controls (118.14,55.81) and (100.23,50.44) .. (100.23,43.81) ;
			\draw  [color={rgb, 255:red, 15; green, 2; blue, 208 }  ,draw opacity=1 ] (168.44,70.27) -- (168.44,248.68) .. controls (168.44,253.35) and (155.81,257.14) .. (140.22,257.14) .. controls (124.63,257.14) and (112,253.35) .. (112,248.68) -- (112,70.27) .. controls (112,65.6) and (124.63,61.81) .. (140.22,61.81) .. controls (155.81,61.81) and (168.44,65.6) .. (168.44,70.27) .. controls (168.44,74.95) and (155.81,78.74) .. (140.22,78.74) .. controls (124.63,78.74) and (112,74.95) .. (112,70.27) ;
			\draw  [color={rgb, 255:red, 245; green, 28; blue, 8 }  ,draw opacity=1 ] (162.22,90.22) -- (162.22,231.52) .. controls (162.22,235.1) and (152.55,238) .. (140.61,238) .. controls (128.68,238) and (119,235.1) .. (119,231.52) -- (119,90.22) .. controls (119,86.64) and (128.68,83.74) .. (140.61,83.74) .. controls (152.55,83.74) and (162.22,86.64) .. (162.22,90.22) .. controls (162.22,93.8) and (152.55,96.7) .. (140.61,96.7) .. controls (128.68,96.7) and (119,93.8) .. (119,90.22) ;
			\draw  [dash pattern={on 4.5pt off 4.5pt}] (100.23,269.35) .. controls (126.89,259.74) and (153.55,259.74) .. (180.21,269.35) ;
			\draw  [color={rgb, 255:red, 15; green, 2; blue, 208 }  ,draw opacity=1 ][dash pattern={on 4.5pt off 4.5pt}] (112,248.68) .. controls (130.81,241.11) and (149.63,241.11) .. (168.44,248.68) ;
			\draw  [color={rgb, 255:red, 240; green, 9; blue, 9 }  ,draw opacity=1 ][dash pattern={on 4.5pt off 4.5pt}] (119,231.52) .. controls (133.41,224.37) and (147.81,224.37) .. (162.22,231.52) ;
		\end{tikzpicture}
		
		\caption{The outer, middle, and inner cylinders respectively represent $\rotom$, $\rotom_1$ and $\rotom_2$.}
		\label{fig:figure1}
	\end{minipage}
	\begin{minipage}{0.45\textwidth}
		\centering
		\begin{tikzpicture}[x=0.75pt,y=0.75pt,yscale=-1,scale=0.8]
			
			\draw   (530.21,29.66) -- (530.21,269) .. controls (530.21,275.62) and (512.31,280.99) .. (490.22,280.99) .. controls (468.14,280.99) and (450.23,275.62) .. (450.23,269) -- (450.23,29.66) .. controls (450.23,23.04) and (468.14,17.67) .. (490.22,17.67) .. controls (512.31,17.67) and (530.21,23.04) .. (530.21,29.66) .. controls (530.21,36.29) and (512.31,41.66) .. (490.22,41.66) .. controls (468.14,41.66) and (450.23,36.29) .. (450.23,29.66) ;
			\draw  [color={rgb, 255:red, 15; green, 2; blue, 208 }  ,draw opacity=1 ] (520.57,59.78) -- (520.57,239.07) .. controls (520.57,244.1) and (506.97,248.18) .. (490.2,248.18) .. controls (473.43,248.18) and (459.83,244.1) .. (459.83,239.07) -- (459.83,59.78) .. controls (459.83,54.75) and (473.43,50.67) .. (490.2,50.67) .. controls (506.97,50.67) and (520.57,54.75) .. (520.57,59.78) .. controls (520.57,64.81) and (506.97,68.89) .. (490.2,68.89) .. controls (473.43,68.89) and (459.83,64.81) .. (459.83,59.78) ;
			\draw  [color={rgb, 255:red, 245; green, 28; blue, 8 }  ,draw opacity=1 ] (510.17,80.68) -- (510.17,219.66) .. controls (510.17,222.98) and (501.2,225.67) .. (490.13,225.67) .. controls (479.06,225.67) and (470.09,222.98) .. (470.09,219.66) -- (470.09,80.68) .. controls (470.09,77.36) and (479.06,74.67) .. (490.13,74.67) .. controls (501.2,74.67) and (510.17,77.36) .. (510.17,80.68) .. controls (510.17,84) and (501.2,86.69) .. (490.13,86.69) .. controls (479.06,86.69) and (470.09,84) .. (470.09,80.68) ;
			\draw  [dash pattern={on 4.5pt off 4.5pt}] (450.23,269) .. controls (476.89,259.39) and (503.55,259.39) .. (530.21,269) ;
			\draw  [color={rgb, 255:red, 15; green, 2; blue, 208 }  ,draw opacity=1 ][dash pattern={on 4.5pt off 4.5pt}] (462.39,239.33) .. controls (481.2,231.76) and (500.02,231.76) .. (518.83,239.33) ;
			\draw  [color={rgb, 255:red, 240; green, 9; blue, 9 }  ,draw opacity=1 ][dash pattern={on 4.5pt off 4.5pt}] (470.09,219.66) .. controls (483.76,213.29) and (497.43,213.29) .. (511.09,219.66) ;
			\draw    (329.5,150.67) -- (450.23,269) ;
			\draw    (329.5,150.67) -- (450.23,29.66) ;
			\draw [color={rgb, 255:red, 15; green, 2; blue, 208 }  ,draw opacity=1 ]   (370.01,150.43) -- (459.83,239.07) ;
			\draw [color={rgb, 255:red, 15; green, 2; blue, 208 }  ,draw opacity=1 ]   (370.01,150.43) -- (459.83,59.78) ;
			\draw [color={rgb, 255:red, 15; green, 2; blue, 208 }  ,draw opacity=1 ]   (520.57,59.78) -- (609.98,149.43) ;
			\draw [color={rgb, 255:red, 15; green, 2; blue, 208 }  ,draw opacity=1 ]   (520.16,240.08) -- (609.98,149.43) ;
			\draw    (530.21,29.66) -- (650.04,150.28) ;
			\draw    (530.21,269) -- (650.04,150.28) ;
			\draw   (329.5,150.67) .. controls (436.35,190.47) and (543.19,190.47) .. (650.04,150.67) ;
			\draw  [dash pattern={on 4.5pt off 4.5pt}] (329.5,150.67) .. controls (436.35,111.17) and (543.19,111.17) .. (650.04,150.67) ;
			\draw [color={rgb, 255:red, 245; green, 28; blue, 8 }  ,draw opacity=1 ]   (400.17,150.67) -- (470.09,219.66) ;
			\draw [color={rgb, 255:red, 245; green, 28; blue, 8 }  ,draw opacity=1 ]   (400.17,150.67) -- (470.09,80.68) ;
			\draw [color={rgb, 255:red, 245; green, 28; blue, 8 }  ,draw opacity=1 ]   (510.17,80.68) -- (580.09,149.67) ;
			\draw [color={rgb, 255:red, 245; green, 28; blue, 8 }  ,draw opacity=1 ]   (511.09,219.66) -- (581.02,149.67) ;
			\draw  [color={rgb, 255:red, 15; green, 2; blue, 208 }  ,draw opacity=1 ] (370.01,149.43) .. controls (450,176.36) and (529.99,176.36) .. (609.98,149.43) ;
			\draw  [color={rgb, 255:red, 15; green, 2; blue, 208 }  ,draw opacity=1 ][dash pattern={on 4.5pt off 4.5pt}] (370.01,149.43) .. controls (450,123.97) and (529.99,123.97) .. (609.98,149.43) ;
			\draw  [color={rgb, 255:red, 245; green, 28; blue, 8 }  ,draw opacity=1 ] (400.17,150.67) .. controls (460.14,163.23) and (520.12,163.23) .. (580.09,150.67) ;
			\draw  [color={rgb, 255:red, 245; green, 28; blue, 8 }  ,draw opacity=1 ][dash pattern={on 4.5pt off 4.5pt}] (400.17,149.67) .. controls (460.14,137.14) and (520.12,137.14) .. (580.09,149.67) ;
		\end{tikzpicture}
		\caption{The outer, middle, and inner diamonds respectively represent $\D$, $\D_1$ and $\D_2$.}
		\label{fig:figure2}
	\end{minipage}
\end{figure}


Assume that we can do active local measurements $\P_{V, h}$ within $\mho$ for \eqref{eqn : semilinear model}. Specifically, we make sources  in a sufficiently small neighbourhood 
$$\mathscr{B}^s_\varrho(\mho):= \left\{ f \in \mathscr{H}^s(\mho) : \|f\|_{\mathscr{H}^s(\mho)} \leq \varrho  \right\}$$
 of $0$ in the space   \begin{equation}\label{Domian of source-to-solution map}
	\mathscr{H}^{s}\left(  \mho \right) := \bigcap_{0 \leq k \leq s} C^k\l((0,T); H^{s - k} (\mathcal{O}) \r),
\end{equation} where $\varrho > 0$ is a sufficiently small constant and $s \geq 4$. The routine local existence  theory of semilinear hyperbolic PDEs (see e.g. \cite[Theorem 6.3.1]{Rauch-book}) guarantees that there exists a unique solution $u$ to \eqref{eqn : semilinear model} lying in the space  $$
\mathscr{H}^{s}\left(  \M_{T}  \right) :=  \bigcap_{0 \leq k \leq s} C^k\l((0,T); H^{s - k} (\R^3) \r),
$$ with $\M_{T} : = (0, T) \times \R^3$.
Consequently, the following source-to-solution map
\[\begin{aligned}
	\P_{V, h} : f &\longmapsto u |_\mho, && \mbox{for $f \in \mathscr{B}^s_\varrho(\mho)$ and $(f, u)$ solves \eqref{eqn : semilinear model}},  
\end{aligned}\] 
is well-defined.

Given the information of $\P_{V, h}$ in $\mho$, we can reconstruct the nonlinearity and potential in $\D\setminus \mho$ stably in the H\"older sense. More precisely,  we shall prove
\begin{theorem}\label{theorem: main theorem}
	Suppose  $(V,h)$ and $(\tilde{V},\tilde{h})$ are two pairs of smooth potential and non-vanishing coefficient functions in \eqref{eqn : semilinear model}, which agree in $\mho$.  Their source-to-solution maps are defined accordingly as above. 
	If there exists some $\delta \in (0, \varrho)$ such that  \begin{equation}
		\label{error of the source-to-solution map V h}
		\l\| \P_{V,h}(f) - \P_{\tilde{V},\tilde{h}}(f)\r\|_{\mathscr{H}^s(\rotom)} \le \delta, \quad  \forall f\in \mathscr{B}^s_\varrho(\mho),
	\end{equation}	
	then there hold the following stability estimates of H\"older type for  simultaneous inversion of potential  and nonlinearity,  
	\begin{align}
		\label{eqn : stability h}\l\| h-\tilde{h}\r\|_{L^{\infty}(\overline{\D_1})} \lesssim \delta^{\frac{2}{15(s+2)}},
	\end{align}
	and
	\begin{align}
		\label{eqn : stability V} \l\| V-\tilde{V}\r\|_{L^{\infty}(\overline{\D_2})} \lesssim \l(\l(\delta^{\frac{2}{5}} + \l\|h-\tilde{h} \r\|_{L^{\infty}(\overline{\D_1})}\r)^{\frac{2}{(3s+7)(6s+5)}} + \l\|h-\tilde{h} \r\|_{L^{\infty}(\overline{\D_1})}\r)^{\frac{1}{2}}.
	\end{align}

\end{theorem}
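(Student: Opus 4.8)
I would carry out the higher-order linearisation method in quantitative form. Since $hu^3$ is odd in $u$, the solution of \eqref{eqn : semilinear model} driven by $f=\varepsilon_1f_1+\varepsilon_2f_2+\varepsilon_3f_3$ expands in only odd homogeneities of $\varepsilon$, the first two being linear and trilinear. Writing $v_j$ for the distorted plane wave with $\Box v_j+Vv_j=f_j$ and $v_j\equiv0$ for $t\le0$, the trilinear part restricted to $\mho$ equals $w|_\mho$, where $\Box w+Vw=-6h\,v_1v_2v_3$ and $w\equiv0$ for $t\le0$. The structural observation driving the whole argument is that $h$ occupies the principal symbol of the interaction source $h\,v_1v_2v_3$, whereas $V$ enters only the first lower-order amplitudes of the $v_j$ and of the propagators; so $h$ should be recoverable from leading-order microlocal data with no dependence on $V-\tilde V$ --- which is why \eqref{eqn : stability h} is unconditional --- while $V$ will require subprincipal data and, through the $v_j$, prior control of $h$ --- which is why \eqref{eqn : stability V} is conditioned on $\|h-\tilde h\|_{L^\infty(\overline{\D_1})}$.

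\textbf{Isolating the trilinear response and recovering $h$.} First I would invoke quantitative well-posedness of \eqref{eqn : semilinear model} in $\mathscr{H}^s$, with constants depending only on $s,T,\rho$ and on the coefficients restricted to the fixed compact $\overline{\D}$, to bound the fifth-order Taylor remainder of $\varepsilon\mapsto\P_{V,h}(\varepsilon_1f_1+\varepsilon_2f_2+\varepsilon_3f_3)$ by $C|\varepsilon|^5$. The alternating sum $\sum_{\sigma\in\{\pm1\}^3}\sigma_1\sigma_2\sigma_3\,\P_{V,h}(\sigma_1\eta f_1+\sigma_2\eta f_2+\sigma_3\eta f_3)$ then kills the linear part and every unmixed cubic term and leaves $8\eta^3\,\mathcal{U}_{V,h}(f_1,f_2,f_3)+O(\eta^5)$, where $\mathcal{U}_{V,h}:=w|_\mho$; subtracting the twin expression for $(\tilde V,\tilde h)$ and using \eqref{error of the source-to-solution map V h} gives $\|\mathcal{U}_{V,h}(f_1,f_2,f_3)-\mathcal{U}_{\tilde V,\tilde h}(f_1,f_2,f_3)\|_{\mathscr{H}^s(\mho)}\lesssim\delta\eta^{-3}+\eta^{2}$, subject to the admissibility constraint $\eta\|f_j\|_{\mathscr{H}^s(\mho)}\le\varrho$. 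To extract $h$ at a point $q\in\D_1$, I would choose $f_1,f_2,f_3$ generating distorted plane waves $v_j\sim e^{i\lambda\phi_j}a_j$ with light-like phases $\phi_j$ solving the potential-free Minkowski eikonal equation and leading amplitudes $a_j$ solving potential-free transport equations, arranged so that the three null conormals $\xi_j=d\phi_j(q)$ interact over $q$ --- for the cubic nonlinearity, so that $\xi_1+\xi_2+\xi_3$ is again null --- whence $\Box_V^{-1}(h\,v_1v_2v_3)$ contains a distorted plane wave issuing from $q$ in the direction $\xi_1+\xi_2+\xi_3$ whose leading singularity, read along a null ray back into $\mho$, has principal symbol proportional to $h(q)\,a_1a_2a_3(q)$. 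Pairing $\mathcal{U}_{V,h}(f_1,f_2,f_3)$ with the restriction to $\mho$ of the adjoint distorted plane wave along the matching fourth null direction and applying quantitative stationary phase evaluates this pairing as $c\lambda^{-m}h(q)a_1a_2a_3(q)+O(\lambda^{-m-1})$, with an explicit nonzero constant $c$ and known non-vanishing, $V$-free amplitudes $a_j(q)$; the difference for $(\tilde V,\tilde h)$ is then $c\lambda^{-m}(h(q)-\tilde h(q))a_1a_2a_3(q)$ up to $O(\lambda^{-m-1})$ and up to the data error, which the pairing amplifies by a fixed power of $\lambda$ against the bound above. Solving for $h(q)-\tilde h(q)$, choosing $\eta$ as the $\lambda$-power mandated by admissibility and then $\lambda$ as the negative power of $\delta$ that balances the remaining terms, and taking the supremum over the compact $\overline{\D_1}$, should give \eqref{eqn : stability h}, the exponent $\tfrac{2}{15(s+2)}$ being the numerical outcome of tracking the powers of $\lambda$ carried by the rescaled sources, the cubic interaction and the testing wave.

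\textbf{Recovering $V$.} Assuming now that $h$ is known to precision $\|h-\tilde h\|_{L^\infty(\overline{\D_1})}$, I would expand each wave as $v_j\sim e^{i\lambda\phi_j}(a_j^{(0)}+\lambda^{-1}a_j^{(1)}+\cdots)$: the leading amplitude $a_j^{(0)}$ obeys a $V$-free transport equation along the null geodesic of $\phi_j$, while the first correction $a_j^{(1)}$ obeys a transport equation with source $-(\Box a_j^{(0)}+Va_j^{(0)})$, so $a_j^{(1)}$ records the integral of $V$ along that null geodesic. Hence the $\lambda^{-1}$-order part of the tested trilinear response of the previous step --- after subtracting its leading term and the now-controlled contribution proportional to $h-\tilde h$ --- is a nonzero-weighted combination of light-ray integrals of $V-\tilde V$ through $q$. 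Letting $q$ range over $\D_2$ and varying the admissible null directions (the strict inclusion $\D_2\Subset\D_1$ together with the time buffers $t_2>t_1>0$ provide the room for this), I would assemble the data of a Minkowski light-ray transform of $V-\tilde V$ on $\D_2$; combining the injectivity of this transform with a quantitative Hölder stability estimate for it, and with an interpolation converting the $\mathscr{H}^s(\mho)$-smallness above into pointwise smallness of the subprincipal symbol, should deliver \eqref{eqn : stability V}, the exponent $\tfrac{2}{(3s+7)(6s+5)}$ and the outer square root being the concrete output of that stability-plus-interpolation step.

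\textbf{Main obstacle.} The hard part will be making the microlocal interaction and stationary-phase calculus \emph{fully quantitative and explicit in the large parameter} $\lambda$: symbol estimates for distorted plane waves that are uniform in the potential over $\overline{\D}$, effective remainder bounds in stationary phase, and exact tracking of how sources, amplitudes, propagators and testing waves scale in $\lambda$, so that the single scalar $\delta$ can be traded --- under the admissibility constraint $\eta\|f_j\|_{\mathscr{H}^s}\le\varrho$ --- for a genuine Hölder rate. For $V$ there is the further obstacle of separating its subprincipal imprint from that of $h$ and then inverting, with explicit stability, the resulting light-ray transform on the causal diamond; I expect these points, rather than the higher-order linearisation bookkeeping, to carry the real weight of the proof.
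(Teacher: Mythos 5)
Your proposal follows the same skeleton as the paper --- higher-order linearization to isolate a trilinear response, balancing the amplitudes $\eta=\delta^{1/5}$ against a quintic remainder to obtain an operator-norm bound of order $\delta^{2/5}$, then reading $h$ off the principal symbol and the line integral of $V$ off the first subprincipal symbol via stationary phase --- and your parity observation (only odd Taylor coefficients survive) correctly explains why the paper's $u_{(jk)}\equiv 0$ and why the error is $O(|\e|^5)$. Your alternating-sign polarization is a legitimate variant of the paper's inclusion--exclusion construction $W_{V,h}(\cdot)$, and your geometric-optics picture $v_j\sim e^{i\lambda\phi_j}(a_j^{(0)}+\lambda^{-1}a_j^{(1)}+\cdots)$ is the WKB analogue of the paper's paired-conormal-distribution calculus; both isolate $h(\c{y})$ at leading order and $\int_{\gamma}V$ at next order.

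The genuine divergence is in the last step for $V$. You propose to collect light-ray integrals of $V-\tilde V$ through varying points and directions and then appeal to injectivity and Hölder stability of a Minkowski light-ray transform on $\D_2$, combined with an unspecified interpolation --- this is both a different route and the least substantiated part of your argument, since a ready-made quantitative stability estimate for the light-ray transform restricted to the particular family of rays available here (only outgoing null segments from $y\in\D_2\setminus\mho$ to $z\in\mho_2$, with directions constrained by the interaction geometry) is not off the shelf, and it is not clear it would reproduce the exponent $\mu_0/2$. The paper instead avoids inverting any transform: its Proposition~\ref{proposition: stability estimate of V} exploits that $V-\tilde V\equiv 0$ on $\mho$ and $|\nabla(V-\tilde V)|\le 2M$ on $\D$, so along a single outgoing null ray from $y$ into $\mho$ the function $V-\tilde V$ must cross zero within distance $s^*=|V(y)-\tilde V(y)|/(2\sqrt2 M)$, and the triangular minorant $l(s)$ gives $\int_0^{s^*}(V-\tilde V)\gtrsim |V(y)-\tilde V(y)|^2$, hence a pointwise bound with exactly the square root loss. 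This elementary, local comparison argument --- not a transform inversion --- is the missing ingredient, and it is what produces the outer exponent $1/2$ in \eqref{eqn : stability V}. You should also note that the subprincipal symbol in the paper carries not only $\int_{\gamma_{\out}}V$ but also contributions $r^2\kappa_{(i)}^{-1}\int_{\gamma_{(i)}}V$ from the incoming rays, which must be suppressed by sending the interaction angle $r\to 0$; this introduces a second balance (choice of $r$) responsible for the factor $6s+5$ in the exponent, a step your sketch does not account for.
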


Unique inversion of coefficient for hyperbolic equations has been studied extensively. For time-independent cases,  Belishev--Kurylev's boundary control method \cite{BK} together with Tataru's unique continuation approach \cite{T} recovers coefficients for generic  hyperbolic equations. Nevertheless, this set of techniques in general break down for time-dependent wave equations as Alinhac's example \cite{A} suggests. Kurylev--Lassas--Uhlmann \cite{KLU} pioneered the higher order linearization scheme to reconstruct principal coefficients for nonlinear time-dependent hyperbolic PDEs. The core of this methodology is to exploit the nonlinear scattering data of conormal waves and approximate nonlinear waves by linear conormal waves.   Then principal coefficients can be reconstructed from the travel times of linearized conormal waves. 

Compared with principal terms, the information of lower order terms, being less visible from the viewpoint of singularities, is encoded in distributional and symbolic calculus of lower order.   Based on the higher order linearization, detection of lower order coefficients for nonlinear hyperbolic equations also has been considered. Chen--Lassas--Oksanen--Paternain \cite{CLOP} introduced the method of broken light-ray to determine subprincipal coefficients for cubic semilinear wave equations. Unique inversion of potential in the similar setting  was proved by Feizmohammadi--Oksanen \cite{FO}.

We refer the readers to \cite{SaBarreto-Stefanov,FLO,LUW, Hintz-Uhlmann-Zhai-IMRN,Hintz-Uhlmann-Zhai-CPDE,Kian, SaBarreto-Uhlmann-Wang, WZ} for more recent advances in uniqueness of inversion for nonlinear hyperbolic PDEs, and to \cite{KLOU,Uhlmann-Wang-CPAM, CLOP-CMP,CLOP-Annalen, deHoop-Uhlmann-Wang-2019, deHoop-Uhlmann-Wang-2020, Uhlmann-Zhai-Annalen,Uhlmann-Zhai-JMPA} for applications in physical models.  

Stability of inversion is more involved and entails subtle quantitative estimates for local measurements. The framework of boundary control is essentially qualitative and thus unlikely to deliver desired quantitative results. On simple Riemannian manifolds, Stefanov--Uhlmann \cite{SU-IMRN} reduced the stable determination of coefficients for linear wave equations to that of metric rigidity \cite{SU-JAMS} and X-ray transforms \cite{SU-DUKE}. In the non-simple case, Bao--Zhang \cite{BZ} established sensitivity results on manifolds with fold-regular conditions by utilizing Gaussian beams. 

The research of stable inversion for nonlinear hyperbolic equations is rather recent. Lassas--Liimatainen--Potenciano-Machado--Tyni \cite{LLPT22, LLPT24, LLPT-APDE} established stability results for semilinear wave equations in bounded domains. They proved that the coefficient of nonlinear terms can be stably recovered from full boundary measurements as long as the observation errors of Dirichlet-to-Neumann maps are small.   

We aim to develop a general framework of stable inversion of lower order coefficients for nonlinear hyperbolic equations with a small amount of datum. Compared with stability for linear equations, uniqueness of recovery, and inversion in full data DtN models, we encounter the following novel hurdles :
\begin{issue}
  How to measure nonlinear response operators? \end{issue} Unlike linear equations,  there is no  natural operator space to accommodate nonlinear response operators. However, it is important in stability problems to convert the error estimates for nonlinear response operators to certain operator norms. 
For this purpose, we  {introduce some trilinear response operators to approximate nonlinear source-to-solution maps} for \eqref{eqn : semilinear model}. The difference of two such trilinear operators turns out to be the leading part of the difference of two source-to-solution maps. Therefore, it is legitimate to  {employ the norms of trilinear operators to measure nonlinear response operators}.
	
	\begin{issue} How to establish quantitative estimates for underlying coefficients? \end{issue} For the purpose of uniqueness, it suffices to prove injectivity results for some transformations. For hyperbolic PDEs, this is usually achieved by doing singularity analysis and removing non-singular and less singular terms. With regard to stabilities, the key matter is that those neglected terms in uniqueness problems, such as smooth terms, will no longer be negligible but become unavoidable errors. In order to quantify to what extent such errors affect inversion results, it has to  {entail rather precise quantitative estimates for relevant transformations}. The trilinear response operators are essentially trilinear Fourier integral operators and their symbols actually include some integral transformations of underlying coefficients. Hence some refined symbol calculi and estimates are pivotal to derive quantitative estimates for those transformations. 
	
	\begin{issue} How to capture quantitative data of potentials in nonlinear equations? \end{issue} Zero-order coefficients are less visible than others, as potentials never appear in the principal terms of our local measurements. Consequently, routine principal symbol calculus provides no information for them. To remedy this, we have to go further and  {analyse transport equations for lower order symbols to detect integral transformations of potentials}.  In additional, the product in nonlinearity severely complicates the symbolic structure of nonlinear conormal waves. Therefore, we  {perform
	  subtle order analysis  to find out the exact leading terms and subleading terms in the expansion of multilinear conormal waves}. 
 
\begin{issue}
	How to achieve stable inversion with  a small amount of datum?
\end{issue}

Compared with classical DtN models, the source-to-solution type measurements made in Theorem \ref{theorem: main theorem} carry rather limited data. DtN models utilize the boundary information to recover coefficients in the interior, while Theorem \ref{theorem: main theorem} requires only the knowledge in a small spatial neighbourhood to  reconstruct the coefficients in the exterior. Such amount of information is proven to be insufficient for inverse coefficient problems of time-dependent linear equations, see e.g. \cite{Stefanov-Yang-APDE}. For nonlinear equations, one may however exploit the nonlinear scattering data to prove the uniqueness pointwisely. On the other hand, the measurement for stability problems, which is in the form of Sobolev norms of solutions, is far less informative than the pointwise knowledge of solutions in uniqueness problems. Therefore, we    {employ more robust tools, such as energy estimates and stationary phase estimates,  to deal with such  measurement of neighbourhoodwise type}.

In summary, we propose the following roadmap to solve stable inversion problems of lower order coefficients in nonlinear wave equations. 


\begin{figure}[htbp]
    \tikzstyle{startstop} = [rectangle, rounded corners=0.5cm, thick, minimum width = 3cm, minimum height=1cm,text centered, draw = black, ]
    \tikzstyle{process} = [rectangle, thick, minimum width=5cm, minimum height=1.5cm, text centered, text width = 5cm, inner sep = 8pt, draw = black,]
    \tikzstyle{arrow} = [thick,->,>=latex]
    \centering
    \begin{tikzpicture}[node distance=3cm]
        \node (m1) [process] {Estimates for nonlinear response operators};
        \node (m2) [process, right = 40pt of m1] {Construction of nonlinear waves};
        \node (m3) [process, below = 30pt of m1] {Norms for multilinear approximations};
        \node (m4) [process, below = 30pt of m2] {Estimates for nonlinear errors};
        \node (m5) [process, below = 30pt of m3] {Estimates for symbols of linear waves};
        \node (m6) [process, below = 30pt of m4] {Estimates for symbolic errors};
           \node (m7) [process, below = 30pt of m5] {
           Bounds for integral transforms of coefficients};
           \node (m9) [process, below = 30pt of m7] {
          Stabilities for coefficients};
      \draw[arrow](m1) -- (m3);
      \draw[arrow](m3) -- (m5);
      \draw[arrow](m5) -- (m7);
      \draw[arrow](m7) -- (m9);
      \draw[arrow](m1) -- (m4);
      \draw[arrow](m2) -- (m3);
      \draw[arrow](m3) -- (m6);
      \draw[arrow](m4) -- (m3);
      \draw[arrow](m6) -- (m5);
    \end{tikzpicture}
\end{figure}

To focus on how to remedy the issues above, we look only at the cubic wave equation in Minkowski spacetime in the present paper.   In ongoing and future projects, we shall develop this framework for more general semilinear and quasilinear models in globally hyperbolic Lorentzian manifolds.

This paper is structured as follows. To begin with, we briefly review, in Section \ref{sec : linear waves}, the distributional and symbolic structures of conormal waves, a class of solutions useful for inversion. For the purpose of inverse potential problems, we, in particular, analyse the lower order symbols of conormal distributions in Section \ref{sec: lower order symbol calculus}.  Using such tools, we describe, in Section \ref{sec : nonlinear waves}, a class of special nonlinear conormal waves and their trilinear approximations. Subsequently, we conduct principal as well as lower order symbol calculi to quantify such trilinear approximations in Section \ref{sec : principal singularities}--\ref{sec : trilinear operators}. In the end, Theorem \ref{theorem: main theorem} will be proved, in Section \ref{sec : stability estimates}, via the trilinear approximations by employing quantitative estimates for lower order symbols and some truncated integrals.

\section{Linear distorted plane waves}
\label{sec : linear waves}

We commence from linear waves described by inhomogeneous wave equations
\begin{align}\label{eqn : linear wave with potential}\Box u + Vu = f.\end{align}

\subsection{Conormal distributions}

A useful class of special solutions to solve inverse problems are conormal distributions, also known as distorted plane waves.
Suppose a smooth $n$-manifold $X$ and the cotangent space $T^\ast_x X$ at $x \in X$ are furnished with local coordinates $(x^1, \cdots, x^n)$ and $(\xi_1, \cdots, \xi_n)$.
We say $Y$ is a $p$-submanifold of $X$ if $Y$ is equipped with local coordinates $x' := (x^1, \cdots, x^r)$ with $1 \leq r \leq n - 1$. The normal bundle $NY$ to $Y$ in $X$ is defined to be the quotient $N Y := T X / TY$, while the conormal bundle $N^\ast Y$ to $Y$ in $X$ is the dual bundle to $NY$. In local coordinates,  $N^\ast Y$ is locally expressed by $$\left\{\left(x', \xi''\right)\right\} := \left\{\left(x^1, \cdots, x^r, \xi_{r+1}, \cdots, \xi_n\right)\right\}.$$ With respect to the symplectic structure $(T^\ast X, dx^\alpha \wedge d\xi_\alpha)$, the conormal bundle $N^\ast Y$ is obviously a Lagrangian submanifold. 
\begin{definition}   The space of conormal distributions to $Y$ of order $m$, denoted by $I^{m}(N^\ast Y)$  for simplicity, consists of distributions,  which locally take the form 
	\[\int_{\R^{n-r}} e^{\imath x'' \cdot \xi''} a(x', \xi'') \,d\xi''\]  
	with amplitude $a \in S^{m +n/4 - (n-r)/2}(\R^r_{x'} \times \R^{n-r}_{\xi''})$.   For $A \in I^m(N^\ast Y)$, the principal symbol of $A$ is an invariantly defined function $\sigma[A]$ on $N^{\ast}Y$ such that
	\begin{align} \label{eqn: invariant definition of the principal symbol}
		\sigma[A](\alpha) &= e^{\imath\psi(\pi(\alpha),\alpha)} \langle A, \phi e^{-\imath\psi(x,\alpha)}\rangle \omega\in   S^{m+\frac{n}{4}}(N^\ast Y)/ S^{m+\frac{n}{4}-1}(N^\ast Y)  
	\end{align}
	where 
    \begin{itemize}
   \item $\phi \in C_0^\infty(X)$ is a half-density valued cut-off function supported near $\pi(\alpha)$;
   \item $\psi\in C^{\infty}(X \times N^\ast Y)$ is a  function such that $\psi$ is homogeneous of degree $1$ in $\alpha$ and the graph of $x\mapsto d_{x}\psi(x,\alpha)$ intersects $N^\ast Y$ transversally at $\alpha$;
   \item $\omega$ is the unity density of order $1/2$ on the fiber of $T^{\ast}X$ homogenous of degree $n/2$. 
   \end{itemize}
\end{definition}

Moreover, Melrose--Uhlmann \cite{MU} and Guillemin--Uhlmann \cite{GU81} formulated paired conormal distributions to address conormal waves around their sources. We next express them in local coordinates. Let $X$ be a smooth $n$-manifold with local coordinates $$x := \left(x', x'', x'''\right) \in \R^k \times \R^{n - k - d} \times \R^d$$ and $p$-submanifolds 
\begin{align*}Y_0 &:= \left\{x \in M : x' =0,  x'' = 0\right\}\\ 
	Y_1 &:= \left\{x \in M : x'' = 0\right\}.\end{align*} Consider two conormal bundles $N^\ast Y_0, N^\ast Y_1 \subset T^\ast X \setminus 0$ with local expression 
\begin{align*}N^\ast Y_0 &= \left\{ (x, \xi) \in T^\ast M : x' = 0, x'' = 0, \xi'''=0 \right\}   \\    
	N^\ast Y_1 &=  \left\{ (x, \xi) \in T^\ast M :  x'' = 0, \xi'=0, \xi'''=0 \right\},\end{align*} where $\xi := (\xi', \xi'', \xi''') \in T^\ast_xM$.
\begin{definition}\label{def: definition of paired conormal distribution}
	The space $I^{p, l}\left(N^\ast Y_0, N^\ast Y_1\right)$ consists of distributions of the form
	\[ \int_{\R^{n-d}} e^{\imath (x' \cdot \xi' + x'' \cdot \xi'')} b\left(  x''', \xi', \xi''  \right)  \, d\xi' d\xi''    \]
	where the smooth amplitude $b$ is an element of $$S^{M, M'}\left(\R^d_{x'''} \times \R^{n - k - d}_{\xi''} \times \R^k_{\xi'}\right), \quad \mbox{with $M = p - \frac{n}{4} + \frac{k}{2} + \frac{d}{2}$ and $M' = l - \frac{k}{2}$},$$ which means that  for any $x'''$ of a compact subset $K$ and multi-indices $\alpha, \beta, \gamma$,
	\[  \left|  \p_{x'''}^\alpha \p_{\xi'}^\beta \p_{\xi''}^\gamma b(x''', \xi', \xi'') \right| \lesssim_{K, \alpha, \beta, \gamma} \langle \xi', \xi'' \rangle^{M - |\beta|} \langle \xi'' \rangle^{M' - |\gamma|}.\]\end{definition}

For our purpose, it turns out to be enough to invoke the prototype of paired conormal distributions. That is the class $I^{p, -1/2}\left(N^\ast Y_0, N^\ast Y_1\right)$ with $k=1$ and $d=0$. In this case, we use the shorthand 
\[I^p\left(N^\ast Y_0, N^\ast Y_1\right) := I^{p, -1/2}\left(N^\ast Y_0, N^\ast Y_1\right).\]  

With paired conormal distributions, one can construct the solution to \eqref{eqn : linear wave with potential} with a point source. \begin{proposition}\label{prop : linear parametrix}
	Suppose $(\Lambda_0, \Lambda_1)$ is a transversally intersecting pair of conormal bundles of $T^\ast X$ such that \begin{itemize}
		\item $\Lambda_0 = N^\ast \{\c{x}\}$;
		\item $\Lambda_1$ is the bicharacteristic flow-out of $\Sigma(\Box) \cap   \mathrm{WF}(f)$ with $f \in I^{\mu + 3/2}(\Lambda_0)$. \end{itemize} Then the following holds.
	\begin{itemize} 
		\item The solution $u$ to \eqref{eqn : linear wave with potential} is a paired conormal distribution which lies in the class $I^{ \mu}(\Lambda_0, \Lambda_1)$. 
		\item The principal symbol   $\sigma[u]$ satisfies the transport equation \begin{equation}\label{symbol equation of principal symbol of one fold -linearization}
			\begin{aligned}
				\begin{cases}
					\mathscr{L}_{H_\Box}\sigma[u] = 0 \quad &\text{on }\Lambda_1\b\partial\Lambda_1,\\
					\sigma[u] = \mathscr{R}\l(\sigma\l[\Box\r]^{-1}\sigma[f]\r) &\text{on }\partial\Lambda_1
				\end{cases}
			\end{aligned}
		\end{equation} where \begin{itemize}
			\item $\sigma[u]$ is defined in \cite[Section 4]{MU};
			\item $H_\Box$ is the Hamiltonian vector field of  $\sigma[\Box]$ with respect to the symplectic structure of $T^\ast X$;
			\item $\mathscr{L}_\cdot$ denotes the Lie derivative along vector fields;
			\item  $\mathscr{R}$ is a symbol transition map from $\Lambda_0$ to $\p \Lambda_1$ introduced in \cite[(4.12)]{MU}. \end{itemize}\end{itemize} \end{proposition}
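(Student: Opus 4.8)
The plan is to construct $u$ by a parametrix argument within the Melrose--Uhlmann calculus of paired Lagrangian distributions \cite{MU,GU81}, treating the potential as a lower order perturbation of $\Box$. The starting point is that $\Box+V$ is a second order operator of real principal type whose principal symbol $\sigma[\Box]$, characteristic set $\Sigma(\Box)$, and Hamilton field $H_\Box$ are all independent of $V$; moreover its subprincipal symbol vanishes, since $\Box$ has constant coefficients with no first order part and $V$ has order $0$. By \cite{MU} there is a microlocal parametrix $Q_0$ for $\Box$, a paired Lagrangian operator associated with the diagonal and the flow-out of $\Sigma(\Box)$, which sends $I^{\mu+3/2}(\Lambda_0)$ into $I^{\mu}(\Lambda_0,\Lambda_1)$ whenever $\Lambda_0=N^\ast\{\c x\}$ meets the flow-out $\Lambda_1$ of $\Sigma(\Box)\cap\mathrm{WF}(f)$ transversally. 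To absorb $V$, I would form the asymptotic Neumann series $Q\sim\sum_{k\geq 0}(-Q_0V)^k Q_0$: multiplication by $V\in C^\infty$ is an order $0$ operator, hence preserves the conormal and paired conormal classes, and each further application of $Q_0$ improves the order of the symbol along $\Lambda_1$, so the successive terms have strictly decreasing orders and the series can be summed inside the calculus. This produces $Q$ with $(\Box+V)Q\equiv\mathrm{Id}$ modulo a smoothing operator and with the same leading symbol as $Q_0$.

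Next I would set $u:=Qf$ for $f\in I^{\mu+3/2}(\Lambda_0)$, so that $u\in I^{\mu}(\Lambda_0,\Lambda_1)$ and $(\Box+V)u-f$ is smooth. Using finite propagation speed together with the vanishing of $u$ for $t\leq 0$, the genuine solution of \eqref{eqn : linear wave with potential} differs from $u$ by a smooth function supported in the causal region; such a function lies in $I^{\mu'}(\Lambda_0,\Lambda_1)$ for every $\mu'$, hence in particular in $I^{\mu}(\Lambda_0,\Lambda_1)$. Therefore the solution itself belongs to $I^{\mu}(\Lambda_0,\Lambda_1)$, which is the first assertion.

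For the symbol I would treat $\Lambda_1\setminus\partial\Lambda_1$ and $\partial\Lambda_1=\Lambda_0\cap\Lambda_1$ separately. Away from the intersection $u$ is conormal to the hypersurface underlying $\Lambda_1$, so applying the standard symbol calculus of the real principal type operator $\Box+V$ to $(\Box+V)u\equiv f$, and using that $\mathrm{WF}(f)$ is disjoint from $\Lambda_1\setminus\Lambda_0$ while the subprincipal symbol of $\Box+V$ vanishes, yields the homogeneous transport equation $\mathscr{L}_{H_\Box}\sigma[u]=0$ along the bicharacteristics; the potential $V$, being two orders below $\Box$, does not enter here. At $\partial\Lambda_1$ I would use the Melrose--Uhlmann model computation near the clean intersection of the two Lagrangians: the leading symbol of $Q_0$ there acts on $\sigma[f]$ by dividing out the factor $\sigma[\Box]$, which is elliptic transversally to $\Lambda_0$, and by transporting the result from $\Lambda_0$ across $\Sigma(\Box)$ onto $\partial\Lambda_1$ through the transition map $\mathscr{R}$ of \cite[(4.12)]{MU}; since the $V$-corrections are of lower order there, this gives the boundary condition $\sigma[u]=\mathscr{R}(\sigma[\Box]^{-1}\sigma[f])$ on $\partial\Lambda_1$, with $\sigma[u]$ defined as in \cite[Section 4]{MU}.

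The main obstacle is the bookkeeping at $\partial\Lambda_1$: one must check that the Melrose--Uhlmann composition theorem applies to $Q_0$ acting on a conormal source with exactly the index shift $\mu+3/2\mapsto\mu$, match the leading symbol of $Q_0$ at the intersection with the normalization implicit in \cite[Section 4]{MU}, and confirm that iterating in $V$ neither changes the Lagrangian pair nor the principal symbol. Granting the calculus of \cite{MU,GU81}, what remains is a careful but essentially routine order count.
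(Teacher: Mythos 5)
The paper itself offers no proof: it simply cites \cite[Proposition 5.4, Proposition 6.6]{MU}, which treat general real principal type operators and already cover $\Box+V$ directly. Your reconstruction is correct and rests on exactly the same foundation (the Melrose--Uhlmann paired Lagrangian calculus, vanishing subprincipal symbol, transport equation plus the transition map $\mathscr{R}$ at $\partial\Lambda_1$), but it takes a slight detour: you build a parametrix $Q_0$ for the free operator $\Box$ and then remove $V$ by a Neumann series $\sum_k(-Q_0V)^kQ_0$, whereas the cited propositions apply at once to the full operator $\Box+V$ since its principal symbol, Hamilton field, and (vanishing) subprincipal symbol coincide with those of $\Box$. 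The Neumann series step is therefore sound but unnecessary, and it quietly invokes the composition of $Q_0$ with paired Lagrangian (not merely conormal) distributions, which is a nontrivial piece of the calculus you would have to justify; citing \cite{MU} for $\Box+V$ directly avoids that extra bookkeeping. Apart from this inefficiency the proposal is a faithful account of the argument the paper delegates to \cite{MU}.
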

            \begin{proof}
             See \cite[Proposition 5.4, Proposition 6.6]{MU}. 
            \end{proof}


 \subsection{Principal symbol calculus}

Next, we shall go a bit further to compute the principal symbol and lower order symbols of a paired conormal distribution $u$ along a  bicharacteristic  curve $\beta(s) := (\gamma(s),\dot{\gamma}^{\flat}(s))$ emanating from $(\c{x},\c{\xi})\in \partial\Lambda_1$, where the superscript $\cdot^\flat : T_p X \rightarrow T_p^\ast X$ denotes the musical isomorphism. Without loss of generality, we translate $\c{x}$ to $0$ for convenience.

By \cite[(25.2.11)]{H4}, we Lie differentiate $\sigma[u] := \hat{u}\omega$ with non-vanishing smooth half-density  $\omega$ 
$$
\mathscr{L}_{H_\Box}(\hat{u}\omega) = \left(  H_\Box (\hat{u}) + \frac{1}{2}\l(\div H_\Box \r) \hat{u}\right) \omega.
$$ 
After plugging this into the transport equation \eqref{symbol equation of principal symbol of one fold -linearization}, we restrict   to the bicharacteristic curve $\beta(s)$ and divide the equation by $\omega$. It then gives
$$
H_\Box\hat{u}(\beta(s)) + \frac{1}{2}\l(\hat{u} \cdot \div H_\Box\r)(\beta(s)) = 0.
$$ 
By denoting the integral factor
\begin{equation}\label{integral factor of the transport equation and half-density}
	\rho(s):=\frac{1}{2}\int_{0}^{s}\div H_\Box(\beta(t))dt 
\end{equation} and using the fact
$$
H_\Box \hat{u}(\beta(s)) = \partial_s (\hat{u}\circ\beta)(s),
$$ 
we reduce the transport equation to
$$
\partial_{s}(\hat{u}\circ\beta)(s) + \hat{u}(\beta(s))\partial_{s}\rho(s) = 0.
$$
Multiplying  the equation by  $e^{\rho(s)}$ leads to
\begin{equation}\label{eqn: transport equation of the principal symbol of linear waves}
   \left\{\begin{aligned}
		\partial_{s}\l(e^{\rho(s)}\hat{u}\circ\beta(s) \r) &= 0 \\ 
		\hat{u}(\beta(0)) &= \hat{u}(0,\c{\xi}),\end{aligned}\right.
\end{equation}
which amounts to
\begin{equation}\label{eqn: solution of the transport equation without half-density}
e^{\rho(s)}\hat{u}(\beta(s)) \equiv \hat{u}(0;\c{\xi}).
\end{equation}
Write
\begin{equation}\label{eqn: factor alpha as a zero-th order half-density}
   \alpha(\beta(s)) := e^{-\rho(s)}\omega(\beta(s))\omega^{-1}(\beta(0)).
\end{equation}
By the proof of \cite[Proposition 1]{CLOP}, we see that $\alpha$ is strictly positive and homogeneous of degree $0$ in $\dot{\gamma}^{\flat}(s)$. Combining \eqref{eqn: solution of the transport equation without half-density} and \eqref{eqn: factor alpha as a zero-th order half-density} yields that 
\begin{equation}\label{eqn: principal symbol along bicharacteristics}
  \sigma[u](\beta(s)) = \alpha(\beta(s))\sigma[u](\beta(0)).
\end{equation}

If $\sigma[u]$ is positively homogeneous of degree $\mu + 1/2$, then
\begin{equation}\label{homogeneity of the principal symbol}
	e^{\rho(s)}\hat{u}(\gamma(s),   \lambda \dot{\gamma}^\flat(s)) = \lambda^\mu  e^{\rho(s)} \hat{u}(\beta(s)), \quad \lambda > 0.
\end{equation}

The principal symbol calculus encoded in \eqref{symbol equation of principal symbol of one fold -linearization} is indeed potential-free, since $V$ appears neither in the principal symbol nor the subprincipal symbol of $\Box + V$. To engage $V$, we must look into lower order symbols. 

\subsection{Lower order symbol calculus}\label{sec: lower order symbol calculus}

For a solution $u$ to \eqref{eqn : linear wave with potential}, we decompose it into \[u := u^{\fre} + u^{\rem},\] where $u^{\fre}$ solves the free wave equation
\begin{equation*}
	\Box u^{\fre} =f
\end{equation*} and $u^{\rem}$ is the remainder. Then it is clear from Proposition \ref{prop : linear parametrix} that 
\begin{equation*}
	\left\{\begin{aligned}
		u^{\fre} &\in I^{\mu}(\Lambda_0,\Lambda_1) \quad \mbox{with $\sigma\l[u^{\fre}\r]  = \sigma[u]$};\\  u^{\rem} &\in I^{\mu-1}(\Lambda_0,\Lambda_1).
	\end{aligned}  \right.
\end{equation*}

In virtue of the identities
$$
(\Box+V)u^{\rem} = (\Box+V)u-\Box u^{\fre}-Vu^{\fre}=-Vu^{\fre},
$$
the principal symbol of $u^{\rem}$ satisfies the following transport equation  
\begin{equation*}
	\begin{aligned}
		\begin{cases} 
			\mathscr{L}_{H_\Box}\sigma\l[u^{\rem}\r] = -\imath V\sigma[u]\quad &\text{on }\Lambda_1\b\partial\Lambda_1,\\
			\sigma\l[u^{\rem}\r]  = - V \sigma[\Box]^{-1}\sigma[u]\quad &\text{on } \Lambda_0\b\partial\Lambda_1,\\
			\sigma\l[u^{\rem}\r] = -V\mathscr{R}\l(\sigma[\Box]^{-1}\sigma[u] \r)\quad&\text{on }\partial\Lambda_1.
		\end{cases}
	\end{aligned}
\end{equation*}
Denoting $\sigma[u] := \hat{u}\omega$ and $\sigma\l[u^{\rem}\r] := \hat{u}^{\rem}\omega$, we convert this transport equation into
$$
\l(H_\Box \hat{u}^{\rem}\r) \omega  +\frac{1}{2}\l(\div H_\Box\r)\hat{u}^{\rem}\omega = -\imath V \hat{u}\omega.
$$
After restricting to the bicharacteristic curve $\beta(s) := (\gamma(s), \dot{\gamma}^\flat(s))$ and  removing the half-density $\omega$, we obtain 
$$
H_\Box \hat{u}^{\rem}(\beta(s)) + \frac{1}{2}\l(\hat{u}^{\rem}\div H_\Box \r)(\beta(s)) = -\imath V(\gamma(s)) \hat{u}(\beta(s)).
$$
It follows that
$$
\partial_{s}\l(e^{\rho(s)}\hat{u}^{\rem}(\beta(s))\r) = -\imath V(\gamma(s))e^{\rho(s)}\hat{u}(\beta(s)) ,
$$
which is solved by 
\begin{align} \label{principal symbol of the sub-leading term with term near source with out half-density}
	e^{\rho(s)}\hat{u}^{\rem}(\beta(s)) &= \hat{u}^{\rem}(\beta(0))-\imath \int_{0}^{s}V(\gamma(\tilde{s}))e^{\rho(\tilde{s})}\hat{u}(\beta(\tilde{s}))d\tilde{s}
	\\ 
	\notag	&=    \hat{u}^{\rem}(0,\c{\xi}) 
	-\imath \hat{u} (0,\c{\xi}) \int_{0}^{s}V(\gamma(\tilde{s}))    d\tilde{s},
\end{align}
with initial value given by \[ 
\sigma\l[u^{\rem}\r](0, \c{\xi}) =  -V(0)\mathscr{R}\l(\sigma[\Box]^{-1}\sigma[u]\r)(0,\c{\xi}).\]
Plugging \eqref{eqn: factor alpha as a zero-th order half-density} and \eqref{eqn: principal symbol along bicharacteristics} into \eqref{principal symbol of the sub-leading term with term near source with out half-density} shows
\begin{equation}\label{principal symbol of the sub-leading term with term near source}
\sigma[u^{\rem}](\beta(s)) = \alpha(\beta(s))\sigma[u^{\rem}](0,\c{\xi})-\imath\sigma[u](\beta(s))\int_{0}^{s}V(\gamma(\tilde{s}))d\tilde{s}.
\end{equation}

The first term on the RHS of \eqref{principal symbol of the sub-leading term with term near source} turns out to be of lower order.

\begin{proposition} \label{lemma: properties of principal symbol of sub-leading term of one-fold linearization}
	Away from $\partial\Lambda_1$,  we have  $$\sigma\l[u^{\rem}\r](\beta(s)) \in S^{\mu}(\Lambda_1\b\partial\Lambda_1),$$ and explicitly it reads
	\begin{align*}
		\sigma\l[u^{\rem}\r](\beta(s)) = -\imath\sigma[u](x,\xi)\int_{0}^{s}V(\gamma(\tilde{s}))d\tilde{s}   \mod S^{\mu-1}(\Lambda_1\b\partial\Lambda_1).
	\end{align*}

\end{proposition}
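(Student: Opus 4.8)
Since the closed formula \eqref{principal symbol of the sub-leading term with term near source},
\[
\sigma[u^{\rem}](\beta(s)) = \alpha(\beta(s))\,\sigma[u^{\rem}](0,\c{\xi}) - \imath\,\sigma[u](\beta(s))\int_{0}^{s} V(\gamma(\tilde{s}))\,d\tilde{s},
\]
is already in hand, the plan is simply to estimate the symbol order of each of its two summands on $\Lambda_1\b\partial\Lambda_1$ and then add; the substance is that the first, near-source, summand is of strictly lower order.

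For the first summand I would argue as follows. Its order is governed by the boundary value $\sigma[u^{\rem}](0,\c{\xi}) = -V(0)\,\mathscr{R}\l(\sigma[\Box]^{-1}\sigma[u]\r)(0,\c{\xi})$. The principal symbol $\sigma[\Box]$ is real and homogeneous of degree $2$, and is elliptic on $\Lambda_0$ away from the characteristic cone $\partial\Lambda_1$, so the factor $\sigma[\Box]^{-1}$ lowers the order by $2$; the transition map $\mathscr{R}$ of \cite[(4.12)]{MU} transfers the result onto $\partial\Lambda_1$ without raising its order, and $V(0)$ is just a constant. Hence $\sigma[u^{\rem}](0,\c{\xi})$ lies at least one order below the leading term $-\imath\,\sigma[u](\beta(s))\int_{0}^{s}V(\gamma(\tilde{s}))\,d\tilde{s}$. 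Transporting it along $\beta$ does no harm, because by \eqref{eqn: factor alpha as a zero-th order half-density}--\eqref{eqn: principal symbol along bicharacteristics} together with the proof of \cite[Proposition 1]{CLOP} the factor $\alpha$ is smooth, strictly positive and homogeneous of degree $0$, i.e. $\alpha\in S^{0}(\Lambda_1\b\partial\Lambda_1)$. Therefore $\alpha(\beta(s))\,\sigma[u^{\rem}](0,\c{\xi})\in S^{\mu-1}(\Lambda_1\b\partial\Lambda_1)$. This is the step that genuinely needs ``away from $\partial\Lambda_1$'': as $\beta(s)$ approaches $\partial\Lambda_1$ the factor $\sigma[\Box]^{-1}$, equivalently $\mathscr{R}$, degenerates, so the gain is not uniform up to the boundary.

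For the second summand, Proposition \ref{prop : linear parametrix} identifies $\sigma[u]$ on $\Lambda_1\b\partial\Lambda_1$ as the smooth solution of the transport equation \eqref{eqn: transport equation of the principal symbol of linear waves}, carrying the $S^{\mu}$ part. It remains to see that $\beta(s)=(x,\xi)\mapsto\int_{0}^{s}V(\gamma(\tilde{s}))\,d\tilde{s}$ is a symbol of order $-1$ on $\Lambda_1\b\partial\Lambda_1$: $V$ is smooth, the null geodesic $\gamma$ and the parameter $s$ depend smoothly on the point $\beta(s)$, and, since the Hamiltonian flow of the degree-$2$ symbol $\sigma[\Box]$ is homogeneous of degree $1$ in $\xi$, the integral scales by $\lambda^{-1}$ under $\c{\xi}\mapsto\lambda\c{\xi}$; the required derivative bounds follow from smooth dependence of bicharacteristics on initial data. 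Multiplying, the second summand lies in $S^{\mu}(\Lambda_1\b\partial\Lambda_1)$. Adding the two contributions gives $\sigma[u^{\rem}](\beta(s))\in S^{\mu}(\Lambda_1\b\partial\Lambda_1)$, and since the first summand lies in $S^{\mu-1}(\Lambda_1\b\partial\Lambda_1)$ the stated identity holds modulo $S^{\mu-1}(\Lambda_1\b\partial\Lambda_1)$.

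I expect the only genuinely delicate point to be the order count for the near-source summand: one has to justify, within the Melrose--Uhlmann paired Lagrangian calculus, that composing division by the elliptic symbol $\sigma[\Box]$ with the transition map $\mathscr{R}$ across $\partial\Lambda_1$ produces a net drop in order that places $\alpha(\beta(s))\,\sigma[u^{\rem}](0,\c{\xi})$ in $S^{\mu-1}(\Lambda_1\b\partial\Lambda_1)$ once one stays away from $\partial\Lambda_1$; the smoothness and degree-$(-1)$ homogeneity of $\int_{0}^{s}V(\gamma(\tilde{s}))\,d\tilde{s}$, on the other hand, are routine.
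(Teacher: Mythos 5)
Your argument follows the same route as the paper: split the explicit formula \eqref{principal symbol of the sub-leading term with term near source} into a near-source summand and a propagated summand, show the former lies in $S^{\mu-1}$ and the latter in $S^{\mu}$. Two points in your order-count are imprecise, though neither breaks the conclusion. First, you write that $\sigma[u]$ on $\Lambda_1\b\partial\Lambda_1$ ``carries the $S^{\mu}$ part,'' but by the homogeneity \eqref{homogeneity of the principal symbol} (and by \cite[Proposition 4.1]{MU}) one has $\sigma[u]\big|_{\Lambda_1\b\partial\Lambda_1}\in S^{\mu+1}$; only after multiplying by $\int_0^s V(\gamma(\tilde s))\,d\tilde s\in S^{-1}$ does the product land in $S^{\mu}$, which is what your next sentence uses — so the statement as written contradicts your own multiplication. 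Second, you say the transition map $\mathscr{R}$ transfers $\sigma[\Box]^{-1}\sigma[u]$ ``without raising its order.'' The paper invokes \cite[Proposition 4.1]{MU} to get $\sigma[u]\big|_{\Lambda_0\b\partial\Lambda_1}\in S^{\mu+1/2}$, then \cite[Theorem 3]{CLOP} to get $(\sigma[\Box]^{-1}\sigma[u])\big|_{\Lambda_0\b\partial\Lambda_1}\in S^{\mu-3/2}$ and $\mathscr{R}(\sigma[\Box]^{-1}\sigma[u])\in S^{\mu-1}(\partial\Lambda_1)$; that is, $\mathscr{R}$ shifts the order by $+1/2$, not $0$, landing precisely in $S^{\mu-1}$. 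Your looser estimate would place the near-source summand in $S^{\mu-3/2}$, which is still inside $S^{\mu-1}$, so the ``mod $S^{\mu-1}$'' claim survives, but the exact bookkeeping the paper records is lost. To tighten your proof, cite those two results explicitly rather than asserting the behaviour of $\sigma[\Box]^{-1}$ and $\mathscr{R}$ qualitatively.
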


\begin{proof}
	Proposition \ref{prop : linear parametrix} guarantees  that $u\in I^{\mu}(\Lambda_0,\Lambda_1)$. By \cite[Proposition 4.1]{MU}, the principal symbol of $u$ satisfies
	\begin{align*}
		\sigma[u] \big|_{\Lambda_0\b\partial\Lambda_1}&\in S^{\mu+1/2}(\Lambda_0\b\partial\Lambda_1)\\ 
		\sigma[u] \big|_{\Lambda_1\b\partial\Lambda_0}&\in S^{\mu+1}(\Lambda_1\b\partial\Lambda_1).
	\end{align*}
	Since $\sigma[\Box]$ is of order $2$ on $\Lambda_0\b\partial\Lambda_1$,  \cite[Theorem 3]{CLOP} shows that  
	\begin{align*}
		(\sigma[\Box]^{-1}\sigma[u])\big|_{\Lambda_0\b\partial\Lambda_1} &\in S^{\mu-\frac{3}{2}}(\Lambda_0\b\partial\Lambda_1)
		\\ \mathscr{R}\l(\sigma[\Box]^{-1}\sigma[u]\r) &\in S^{\mu-1}(\partial\Lambda_1).
	\end{align*}
	This implies that $\alpha(\beta(s))\sigma\l[u^{\rem}\r](0, \c{\xi})$ is of order $\mu - 1$ due to the fact that $\alpha$  in \eqref{eqn: factor alpha as a zero-th order half-density} is of order $0$.

	From \eqref{homogeneity of the principal symbol}, we know that $\sigma[u]$ is of order $\mu+1$ on $\Lambda_1 \setminus \p \Lambda_1$. Also noting that $\int_{0}^{s}V(\gamma(\tilde{s}))\,d\tilde{s}$ is positively homogeneous of degree $-1$,
	we see that $\sigma\l[u^{\rem}\r](\beta(s))$ is of order $\mu$.\end{proof}

\section{Nonlinear conormal waves}

In this section, we shall elucidate the special solutions, which are of great use for inversion of potential for \eqref{eqn : semilinear model}.

In contrast with linear waves, nonlinear waves can propagate anomalously, i.e. the singularities of nonlinear waves propagate not only along the bicharacteristic curves. 
However, Bony \cite{Bony80, Bony82}, Rauch-Reed \cite{RR82}, and Melrose-Ritter \cite{MR-1985} proved for conormal waves that  the spreading of such singularities is limited to the production of conormal singularities. Such phenomena can be exploited to construct special solutions for inversion.

Conceptually, we would like to make special nonlinear waves such that 
\begin{itemize}
	\item the sources are small and made within $\mho$;
	\item the nonlinear waves are approximated by linear waves;
	\item the waves travel out of $\mho$ and return back;
	\item the nonlinear scattering in $\D \setminus \mho$ can be observed in $\mho$.
\end{itemize}
Then we can show that some scattering transforms encoded in such special solutions recover the desired coefficients in certain stable ways. This technique was pioneered by \cite{KLU}, where $4$ linear waves were utilized to approximate quadratic nonlinearity. The strategy in the present paper however invokes $3$ linear waves for  \eqref{eqn : semilinear model}.  This was formulated in \cite{CLOP} for cubic equations and applied to more general cases in \cite{FLO, FO}. 
In contrast with the principal symbol calculus and qualitative methods therein, we conduct symbol calculus, not only for principal terms but also for remainder terms, to extract the quantitative information of the potentials.

\subsection{Construction of conormal waves}
\label{sec : nonlinear waves}

To proceed, we first set up some notations 
 \begin{align*}
 	\mathbb{L} &:= \left\{  (x, y) \in \mathbb{D}^2 : \mbox{there is a future-pointing light-like line joining $x$ and $y$}  \right\};  \\ 
 	\mathbb{S}^+(\mho) &:= \left\{  (x, y, z) \in \mathbb{D}^3 : (x, y), (y, z) \in \mathbb{L},\, x, z\in \mho,\,\, y \notin \mho \right\}.
 \end{align*}
 For any $(\c{x}, \c{y}, \c{z}) \in \mathbb{S}^+(\mho)$,
 \begin{itemize}

 	\item we fix light-like lines $\gamma_{\mathrm{out}}$ and $\gamma_{(1)}$ connecting $\c{x}, \c{y}, \c{z}$ such that \begin{align*}\c{x} &= \gamma_{(1)}(0)  \\  \c{y} &= \gamma_{\mathrm{out}}(0) = \gamma_{(1)}(s_{\mathrm{in}})  \\ \c{z} &= \gamma_{\mathrm{out}}(s_{\mathrm{out}}) \end{align*} provided some $s_{\mathrm{in}}>0$ and $s_{\mathrm{out}} > 0$;
 	\item we require that the cotangent vector $(\dot{\gamma}_{(1)})^\flat$ is collinear with 
 	\[\c{\nu}_{(1)} := (-1, 1, 0, 0)\] and the cotangent vector $(\dot{\gamma}_{\mathrm{out}})^\flat$ is collinear with 
 	\[\c{\eta} := (-1,  -a(r_0), r_0, 0), \quad \mbox{for some $r_0 \in [-1, 1]$}\] given  the function $a(r) := \sqrt{1 - r^2}$ for $0 \leq r < 1$. \end{itemize}
We write $\c{x}_{(1)} : = \c{x}$ and find $\c{x}_{(k)} \in \mho, k = 2, 3$ so that \begin{itemize} 
  \item $\c{x}_{(1)},\c{x}_{(2)}$ and $\c{x}_{(3)}$ are causally independent, that is, $\c{x}_{(j)}$ does not lie on the causal future of $\c{x}_{(k)}$, for $1\le j\neq k\le 3$;

\item there is a light-like line $\gamma_{(k)}$ such that $$\gamma_{(k)}(0) = \c{x}_{(k)}\quad \mbox{and} \quad \gamma_{(k)}(s_{\mathrm{in}}) = \c{y};$$

 	\item  $(\dot{\gamma}_{(k)}(s_{\mathrm{in}}))^\flat$ is collinear with  
 	\[\c{\nu}_{(k)} := (-1, a(r), (-1)^{k}r, 0), \quad k = 2, 3.\]

   \item For convenience, we denote 
   $$
    \c{\xi}_{(1)} = -\c{\nu}_{(1)},\quad\c{\xi}_{(2)} = \c{\nu}_{(2)},\quad\c{\xi}_{(3)} = \c{\nu}_{(3)}.
   $$
    
    \end{itemize}  
\cite[Lemma 1]{CLOP} shows that there exist some $1/2 \leq \kappa_{(j)} \leq 9/2$ such that
 \begin{equation}\label{eqn: linear relation of three light-like covectors}
 	\c{\eta} = r^{-2} \kappa_{(1)} \c{\xi}_{(1)}   +  r^{-2} \kappa_{(2)} \c{\xi}_{(2)}   +  r^{-2} \kappa_{(3)} \c{\xi}_{(3)}.
 \end{equation}


We make sources \begin{align}\label{eqn : source terms}f_{(k)} := \boldsymbol{\chi}_{(k)} \delta_{\c{x}_{(k)}}, \quad k = 1, 2, 3\end{align} where $\boldsymbol{\chi}_{(k)} $ and $\delta_{\c{x}_{(k)}}$ satisfy the following conditions: 
 \begin{itemize}\item $\delta_{(k)}$ for $k= 1, 2, 3$ is the Dirac delta distribution at $\c{x}_{(k)}$; \item $\boldsymbol{\chi}_{(1)}$ is a microlocal cut-off near $(\c{x}_{(1)}, -(\dot{\gamma}_{(1)})^\flat)$; \item $\boldsymbol{\chi}_{(j)}$ for $j = 2 ,3$ is a microlocal cut-off near $(\c{x}_{(j)}, (\dot{\gamma}_{(j)})^\flat)$; \item the principal symbol $\sigma[\boldsymbol{\chi}_{(k)}]$ is positively homogeneous of degree $\mu+5/2$ with $\mu=-s-3/2$;
 	\item the support of $f_{(k)}$ is contained in a sufficiently small neighbourhood $\mho_{(k)}$ of $\c{x}_{(k)}$; 
 	\item $\|f_{(k)}\|_{\mathscr{H}^s(\mho)} = 1$ for $k = 1, 2, 3.$
 \end{itemize}
 
 For sufficiently small $\e_{(j)} > 0$, we denote $\e := (\e_{(1)}, \e_{(2)}, \e_{(3)})$ and consider the source
 \begin{align}\label{eqn : fcub}
\e_{(1)}f_{(1)}+\e_{(2)}f_{(2)}+\e_{(3)}f_{(3)}.
 \end{align}
 By \cite[Theorem 6.5.2]{Rauch-book}, the local solution $u(\e)$ to \eqref{eqn : semilinear model} with source \eqref{eqn : fcub}  depends smoothly on $\e$.
 
We may view $u(\e)$ as a smooth function in $\e$ and make Taylor's expansion within $\mho$ near $\e = 0$ up to $O(|\e|^4)$. Denote the partial  derivatives of $u(\e)$  at  $\e = 0$ by 
 \begin{align*}
 	u_{(j)} &:= \partial_{\e_{(j)}}u|_{\e=0} \\ 
 	u_{(jk)} &:= -\frac{1}{2!}\partial^2_{\e_{(j)}\e_{(k)}}u|_{\e=0} \\ 
 	u_{(jkl)} &:= -\frac{1}{3!}\partial^3_{\e_{(j)}\e_{(k)}\e_{(l)}}u|_{\e=0}\end{align*} with $j, k, l \in \{1, 2 ,3\}$. Since $u|_{\e = 0} = 0$, these derivatives  solve the following system of wave equations
 \begin{align}
 	\label{one-fold linearization}	\l(\Box +V\r)u_{(j)} &= f_{(j)}; \\ 
 	\label{two-fold linearization}		\l(\Box +V\r)u_{(jk)} &= 0; \\ 
 	\label{three-fold linearization}		\l(\Box +V\r) u_{(jkl)} &= h u_{(j)}u_{(k)}u_{(l)}.
 \end{align} In particular,   \eqref{two-fold linearization}  implies $u_{(jk)} \equiv 0$.


Moreover, the input \eqref{eqn : fcub} guarantees that the $u_{(jkl)}$-terms with distinct $j, k, l$ are dominated quantitatively by the $u_{(j)}$-terms and $u_{(123)}$-term in the Taylor's expansion.  To see this, it requires  Greenleaf--Uhlmann's calculus of conormal distributions \cite{GU}. \begin{proposition}\label{prop : calculus of conormal}
 	Let $Y_j \subset X$ for $j = 1, 2$ be transversal $p$-submanifolds and $$u_j = \hat{u}_j \omega\in I^{\mu_j}(N^\ast Y_j)$$  conormal distributions with a nowhere vanishing smooth half-density  $\omega$. \begin{itemize} \item Then the product distribution $u_1 u_2$ is an element of 
 	\[I^{\mu_1}(N^\ast (Y_1 \cap Y_2), N^\ast Y_1) + I^{\mu_2}(N^\ast (Y_1 \cap Y_2), N^\ast Y_2),\] 
 	where the product of conormal half-densities is defined as 
 	\[u_1 u_2 :=    \hat{u}_1   \hat{u}_2  \omega.\]   
 \item	Microlocally away from $N^\ast Y_1 \cup N^\ast Y_2$,  the product distribution $u_1 u_2$ is a conormal distribution lying in $$I^{\mu_1 + \mu_2 + n/4}(N^\ast (Y_1 \cap Y_2))$$ and having a principal symbol 
 	\[\sigma[u_1 u_2]\big|_{N^\ast (Y_1 \cap Y_2)} =   \hat{u}_1\big|_{N^\ast Y_1}  \hat{u}_2\big|_{N^\ast Y_2} \omega .\] \end{itemize} 
 \end{proposition}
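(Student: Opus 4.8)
\emph{Overall strategy.} I would prove both assertions by a direct computation in adapted local coordinates, so that everything reduces to bookkeeping of symbol orders in oscillatory integrals; the technical heart is putting the product into the normal form of Definition \ref{def: definition of paired conormal distribution}. First, by transversality of $Y_1$ and $Y_2$, near any point of $Y_1\cap Y_2$ one can choose coordinates $x=(x^{(1)},x^{(2)},x^{(3)})$ on $X$ with $Y_1=\{x^{(1)}=0\}$, $Y_2=\{x^{(2)}=0\}$, hence $Y_1\cap Y_2=\{x^{(1)}=x^{(2)}=0\}$. In such coordinates $\omega=\omega_0\,|dx|^{1/2}$ with $\omega_0>0$ smooth; since the product of conormal half-densities is defined by $u_1u_2=\hat u_1\hat u_2\,\omega$, it is enough to prove the statement for the scalar densities $\hat u_1,\hat u_2$ and then reattach $\omega$. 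Transversality also gives $N^\ast_xY_1\cap N^\ast_xY_2=\{0\}$ for $x\in Y_1\cap Y_2$, so H\"ormander's condition on wavefront sets holds and $u_1u_2$ is a well-defined distribution; moreover $\mathrm{WF}(u_1u_2)\subseteq N^\ast Y_1\cup N^\ast Y_2\cup N^\ast(Y_1\cap Y_2)$, because a covector from $N^\ast Y_1$ added to one from $N^\ast Y_2$ over a common base point lies in $N^\ast Y_1+N^\ast Y_2=N^\ast(Y_1\cap Y_2)$ there.

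\emph{The product and its normal form.} Writing $\hat u_j$ as an oscillatory integral over $\R^{k_j}_{\xi^{(j)}}$ with $k_j=\operatorname{codim}Y_j$ and amplitudes $a_1(x^{(2)},x^{(3)},\xi^{(1)})\in S^{\mu_1+n/4-k_1/2}$, $a_2(x^{(1)},x^{(3)},\xi^{(2)})\in S^{\mu_2+n/4-k_2/2}$, the product becomes a double oscillatory integral with phase $x^{(1)}\cdot\xi^{(1)}+x^{(2)}\cdot\xi^{(2)}$ and amplitude $a_1a_2$. The obstruction to the form in Definition \ref{def: definition of paired conormal distribution} is that $a_1$ still depends on $x^{(2)}$ and $a_2$ on $x^{(1)}$. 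To remove this, I would Taylor expand $a_1$ in $x^{(2)}$ about $0$ and, separately, $a_2$ in $x^{(1)}$ about $0$; since $(x^{(2)})^\alpha e^{\imath x^{(2)}\cdot\xi^{(2)}}=(-\imath\partial_{\xi^{(2)}})^\alpha e^{\imath x^{(2)}\cdot\xi^{(2)}}$, integrating by parts in $\xi^{(2)}$ (resp.\ $\xi^{(1)}$) converts each Taylor term into one whose amplitude depends only on $x^{(3)}$, at the cost of $\xi$-derivatives that lower the symbol degree. Carrying the $x^{(2)}$-expansion while keeping $a_2(x^{(1)},\cdot)$ intact produces a representative of $I^{\mu_1}(N^\ast(Y_1\cap Y_2),N^\ast Y_1)$, and symmetrically the $x^{(1)}$-expansion produces a representative of $I^{\mu_2}(N^\ast(Y_1\cap Y_2),N^\ast Y_2)$; the bidegree of the resulting amplitudes matches the exponents $M=p-n/4+k/2+d/2$, $M'=l-k/2$ of Definition \ref{def: definition of paired conormal distribution} with $l=-1/2$ and the parameters $k,d$ dictated by $\operatorname{codim}Y_1,\operatorname{codim}Y_2$, while all Taylor remainders are of strictly lower order and are absorbed into the two summands. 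This yields the membership $u_1u_2\in I^{\mu_1}(N^\ast(Y_1\cap Y_2),N^\ast Y_1)+I^{\mu_2}(N^\ast(Y_1\cap Y_2),N^\ast Y_2)$.

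\emph{The conormal part and its principal symbol.} Microlocally over $N^\ast(Y_1\cap Y_2)\setminus(N^\ast Y_1\cup N^\ast Y_2)$ both $\xi^{(1)}\neq 0$ and $\xi^{(2)}\neq 0$, so in the double oscillatory integral the amplitude $a_1a_2$ is a genuine symbol in the joint variable $(\xi^{(1)},\xi^{(2)})$ of order $(\mu_1+n/4-k_1/2)+(\mu_2+n/4-k_2/2)$; since $\operatorname{codim}(Y_1\cap Y_2)=k_1+k_2$, this is exactly the amplitude order defining $I^{\mu_1+\mu_2+n/4}(N^\ast(Y_1\cap Y_2))$, the asserted class. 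The principal symbol is then read off from the leading ($\alpha=0$) term of the reduction above, whose amplitude is $a_1(0,x^{(3)},\xi^{(1)})\,a_2(0,x^{(3)},\xi^{(2)})$; identifying $a_j(0,x^{(3)},\cdot)$ with $\hat u_j|_{N^\ast Y_j}$ and reattaching $\omega$ gives $\sigma[u_1u_2]|_{N^\ast(Y_1\cap Y_2)}=\hat u_1|_{N^\ast Y_1}\,\hat u_2|_{N^\ast Y_2}\,\omega$.

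\emph{Main obstacle.} The delicate point is the reduction step of the second paragraph: one must verify that the Taylor-plus-integration-by-parts expansion genuinely lands in the paired symbol class $S^{M,M'}$ of Definition \ref{def: definition of paired conormal distribution}, that the two expansion directions produce exactly the two summands at the correct orders, and that every remainder is absorbed, the symbol estimates being uniform as $|\xi^{(1)}|/|\xi^{(2)}|$ ranges over $[0,\infty]$ (transversality guarantees no genuine singularity can form on $N^\ast Y_1\cap N^\ast Y_2$, which is the zero section, but the uniform bounds still need care). This is precisely the content of Greenleaf--Uhlmann's calculus of conormal distributions \cite{GU}, to which the bookkeeping can ultimately be referred.
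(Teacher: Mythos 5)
The paper does not prove Proposition \ref{prop : calculus of conormal} at all: it is stated as a direct citation to Greenleaf--Uhlmann \cite{GU}, with the surrounding text saying only that this calculus is ``required.'' Your sketch is a faithful reconstruction of the standard Greenleaf--Uhlmann argument that the paper implicitly defers to, and the order bookkeeping you perform is consistent with the paper's Definitions of $I^m(N^\ast Y)$ and $I^{p,l}(N^\ast Y_0,N^\ast Y_1)$ (e.g.\ the exponent count giving $\mu_1+\mu_2+n/4$ is correct). One place where the sketch is a little loose, and worth flagging since you highlight it yourself as the main obstacle: Taylor-expanding $a_1$ in $x^{(2)}$ ``while keeping $a_2$ intact'' and symmetrically does not by itself produce the asserted decomposition into the two paired-conormal summands; the actual argument requires a conic partition of the $(\xi^{(1)},\xi^{(2)})$-space into regions $|\xi^{(1)}|\lesssim|\xi^{(2)}|$ and $|\xi^{(2)}|\lesssim|\xi^{(1)}|$ before the expansion-plus-integration-by-parts lands each piece in the product-type symbol class $S^{M,M'}$ of Definition \ref{def: definition of paired conormal distribution}. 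Since you explicitly defer that verification to \cite{GU} -- exactly as the paper does -- this is a matter of exposition rather than a gap.
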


By Proposition \ref{prop : linear parametrix}, each   $u_{(j)}, j = 1, 2, 3$ is a conormal distribution such that 
 \[      \mathrm{WF}(u_{(j)})   \subset N^\ast K_{(j)},\] 
 where $N^\ast K_{(j)}$ is the future flow-out of $\WF(f_{(j)})$.
 
 Since $\{K_{(j)} : j = 1, 2, 3\}$ are pairwisely transverse, 
 Proposition \ref{prop : calculus of conormal} further tells us that the product distribution $u_{(i)} u_{(j)} u_{(k)}$ has a wavefront set contained in
 \[N^\ast \l(K_{(i)} \cap K_{(j)} \cap K_{(k)} \r) \cup \l(\bigcup_{\{j , k\} \subset \{1, 2, 3\}} N^\ast \l(K_{(j)} \cap K_{(k)}\r)\r) \cup \l(\bigcup_{l \in \{1, 2, 3\}} N^\ast K_{(l)}  \r).\] 
 
 Since the sum of two light-like covectors cannot be light-like and $$N^\ast_x (K_{(j)} \cap K_{(k)}) = N^\ast_x  K_{(j)}  \oplus N^\ast_x  K_{(k)},$$ we see that 
 $$
 \l(N^\ast (K_{(j)} \cap K_{(k)})\b \l(N^{\ast}K_{(j)}\cup N^{\ast}K_{(k)} \r)\r) \cap \{\sigma[\Box] = 0\} = \emptyset. 
 $$
 Propagation of singularities guarantees that  the part of the product associated with $N^\ast \l(K_{(j)} \cap K_{(k)}\r) \b \l(N^{\ast}K_{(j)}\cup N^{\ast}K_{(k)} \r) $ exhibits no singularities in $\mho$. 

 Aside from the linear waves, one can only observe in $\mho$ the singularity of $h u_{(1)} u_{(2)} u_{(3)}$ associated with
 \begin{equation*}\label{eqn: conormal bundle of the interaction wave}
 \Lambda_0 := N^\ast \left( K_{(1)}   \cap K_{(2)}  \cap K_{(3)} \right) \setminus 0.
 \end{equation*}
 The third order term $u_{(123)}|_{\mho}$ is the wave propagating from $h u_{(1)} u_{(2)} u_{(3)}$.
 Consequently, the cubic waves, restricted within $\mho$, admit the Taylor's expansion 
 \begin{align}\label{eqn : taylor of u}
 	u|_{\mho} = \sum_{j=1}^3 u_{(j)} \e_{(j)} - u_{(123)} \e_{(1)} \e_{(2)} \e_{(3)} + O(|\e|^4) \mod C^\infty(\rotom).\end{align}

 To understand $u|_{\mho}$ from the perspective of conormal distributions, we first employ Proposition \ref{prop : linear parametrix} to construct the first-order terms $u_{(j)}$ in \eqref{eqn : taylor of u}. Since $u_{(j)}$ obeys \eqref{one-fold linearization} with  $u_{(j)} \in I(N^\ast K_{(j)})$, the product $u_{(1)} u_{(2)} u_{(3)}$ thus lies in the space 
 \[I\left(\Lambda_0\right)  + \sum_{\{j, k\} \subset \{1, 2, 3\}}  I\l(N^\ast \l(K_{(j)} \cap K_{(k)}\r), N^\ast K_{(j)}\r)+ \sum_{j=1}^3 \mathcal{D}'\left( N^\ast K_{(j)}, \e \right)\] where $\mathcal{D}'\left( N^\ast K_{(j)}, \e \right)$ is the space of distributions with wavefront set contained in a small conic neighbourhood of $N^\ast K_{(j)}$. See \cite[Section 3.3]{LUW} for more details.

 
 \subsection{Principal symbols}\label{sec : principal singularities}
 
 To separate the singular parts from smooth parts, we may choose a pseudodifferential operator $\boldsymbol{\chi}$ such that 
 \begin{equation} \label{eqn : microlocal cut-off at y}
 \left\{	\begin{aligned}&
 \mbox{ $\WF(\boldsymbol{\chi})$ lies in a sufficiently small conic neighbourhood of $(\c{y},\c{\eta}) \in \Lambda_0$;} \\&\mbox{ $\WF(\boldsymbol{\chi})$ is away from all $N^\ast K_{(j)}$ and $N^\ast K_{(kl)}$.}\end{aligned}\right. \end{equation}
With the shorthand notation
\begin{equation}\label{source of the interaction wave of three product waves}
 	f_{(123)} := h u_{(1)} u_{(2)}u_{(3)},
\end{equation} 
 we invoke the symbol calculus in Proposition \ref{prop : calculus of conormal} to compute
 	\begin{align}\notag 
 		\sigma\l[f_{(123)}\r](\c{y},\c{\eta})
  &=   h(\c{y})\prod_{j=1}^{3} \sigma\l[u_{(j)}\r]\l(\c{y},r^{-2}\kappa_{(j)}\c{\xi}_{(j)}\r)\\
 	\notag	&= r^{-6(\mu+1)}  h(\c{y})\prod_{j=1}^{3}\kappa_{(j)}^{\mu+1} \sigma\l[u_{(j)}\r]\l(\c{y},\c{\xi}_{(j)}\r)\\
 	\notag	&= r^{-6(\mu+1)}h(\c{y})\l(\prod_{j=1}^{3}\kappa_{(j)}^{\mu+1}\hat{u}_{(j)}\l(\c{y},\c{\xi}_{(j)}\r)\r) \omega.
 	\end{align} 
 For brevity, we will denote the product in the parentheses by
 \[\alpha_{\into}\l(\c{y},\c{\eta};\c{X},\c{\Xi}\r) := \prod_{j=1}^{3}\kappa_{(j)}^{\mu+1}\hat{u}_{(j)}\l(\c{y},\c{\xi}_{(j)}\r) \] 
 with $\c{X} := \l(\c{x}_{(1)},\c{x}_{(2)},\c{x}_{(3)}\r)$ and $\c{\Xi} := \l(\c{\xi}_{(1)},\c{\xi}_{(2)},\c{\xi}_{(3)}\r)$.

  We then split $u_{(123)}$ into $$u_{(123)} = v_{(123)}+w_{(123)},$$  where the two components solve the following two wave equations respectively
 \begin{align}\label{decomposition of three-fold linearization}
 	(\Box+V)   v_{(123)}&=\boldsymbol{\chi} f_{(123)} 
 	\\
 	\notag	(\Box+V)   w_{(123)}&=(\id-\boldsymbol{\chi}) f_{(123)}.
 \end{align}
 As designed, the characteristic variety of $\Box$  does not  intersect with $\WF(\id - \boldsymbol{\chi})$ near $\gamma_{\out}$. Propagation of singularities yields that $w_{(123)}$ must be smooth near $\gamma_{\text{out}}$.

We next analyse the singularities of $v_{(123)}$ in \eqref{decomposition of three-fold linearization}, and compute the principal symbol of $v_{(123)}$ in $\rotom$.

Denote by $\Lambda_1$ the future flow-out of  $\Lambda_0. $  Proposition \ref{prop : linear parametrix} yields  the solution $$v_{(123)}\in I^{3\mu+1/2}(\Lambda_0,\Lambda_1)$$ and the transport equation for $\sigma[v_{(123)}]$ along $\Lambda_1$
 \begin{equation} \label{transport equation of principal term of three interaction waves}
 	\begin{aligned}
 		\begin{cases}
 			\mathscr{L}_{H_\Box}\l(\sigma\l[v_{(123)}\r]\r) = 0 \quad &\text{ on } \Lambda_1\backslash \partial\Lambda_1 ,\\
 			\sigma\l[v_{(123)}\r]  = \mathscr{R}\l(\sigma\l[\Box\r]^{-1} \sigma\l[\boldsymbol{\chi} f_{(123)}\r]\r) \quad &\text{ on } \partial\Lambda_1.
 		\end{cases}
 	\end{aligned}
 \end{equation}
Similar to \eqref{integral factor of the transport equation and half-density} and   \eqref{eqn: factor alpha as a zero-th order half-density}, we denote a strictly positive factor as in
 $$
 \alpha_{\out}(\beta_{\out}(s)):=e^{-\rho_{\out}(s)}\omega(\beta_{\out}(s))\omega^{-1}(\beta_{\out}(0)),
 $$
 with bicharacteristic curve $\beta_{\out}\subseteq \Lambda_1$ and integral factor 
 $$
 \rho_{\out}(s):=\frac{1}{2}\int_{0}^{s}\div H_\Box(\beta_{\out}(t))dt. 
 $$ 
 The solution to \eqref{transport equation of principal term of three interaction waves} at $(\c{z},\c{\zeta})$ is given via \eqref{eqn: principal symbol along bicharacteristics} by
   \begin{align}
 \label{parallel transport along y,eta to z,zeta}		\sigma\l[v_{(123)}\r](\c{z},\c{\zeta})  
 		 &= \alpha_{\out}(\c{z},\c{\zeta})\sigma\l[v_{(123)}\r](\c{y},\c{\eta})  \\   \notag &=r^{-6(\mu+1)}h(\c{y})\l(\alpha_{\out}\alpha_{\into}\mathscr{R}\l(\sigma[\Box]^{-1}\omega\r)\r)\l(\c{z},\c{\zeta};\c{y},\c{\eta};\c{X},\c{\Xi}\r).
    \end{align} 
   Denoting the scalar factor in \eqref{parallel transport along y,eta to z,zeta} by
    \begin{equation}\label{principal symbol of the interaction wave of three waves}  
    \mathcal{A}_{(123)} := \l(\alpha_{\out}\alpha_{\into}\mathscr{R}\l(\sigma[\Box]^{-1}\omega\r)\r)\l(\c{z},\c{\zeta};\c{y},\c{\eta};\c{X},\c{\Xi}\r),
  \end{equation}
 we have that $\mathcal{A}_{(123)}$ is a non-vanishing factor independent of $h$ and $v$, and bounded from below and above
 \begin{equation}\label{uniform bound of A}
  \C_{(123)}^{-1} \le |\A_{(123)}| \le \C_{(123)},
 \end{equation}
for some constant $\C_{(123)} > 0$. Moreover,  \begin{equation}\label{homogeneity of interaction wave of three waves}
 	\sigma\l[v_{(123)}\r](\c{z},t\c{\zeta}) = t^{3\mu+\frac{3}{2}}\sigma\l[v_{(123)}\r](\c{z},\c{\zeta}),\quad \text{for all } t\ge 1.
 \end{equation}
  
Therefore, we have proved
 \begin{lemma}
The principal symbol of $v_{(123)}$ at $(\c{z},\c{\zeta})$ takes the form
 \begin{equation}\label{principal symbol of v_123}
\sigma\l[v_{(123)}\r](\c{z},\c{\zeta})  =   	r^{-6(\mu+1)}h(\c{y})\mathcal{A}_{(123)} \l(\c{z},\c{\zeta};\c{y},\c{\eta};\c{X},\c{\Xi}\r)
 \end{equation}
 satisfying \eqref{parallel transport along y,eta to z,zeta}, \eqref{principal symbol of the interaction wave of three waves}, \eqref{uniform bound of A} and \eqref{homogeneity of interaction wave of three waves}.
\end{lemma}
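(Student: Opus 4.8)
The plan is to assemble the symbol identities established in this subsection into the asserted closed form; the two stated properties then fall out. Concretely there are three moves: apply the linear parametrix to the microlocalized equation, identify its Cauchy datum on $\partial\Lambda_1$, and transport it along the flow-out $\Lambda_1$.

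First, I would apply Proposition~\ref{prop : linear parametrix} to the first equation in \eqref{decomposition of three-fold linearization}. By Proposition~\ref{prop : calculus of conormal} the product $u_{(1)}u_{(2)}u_{(3)}$ is, microlocally near $(\c{y},\c{\eta})$, conormal to $K_{(1)}\cap K_{(2)}\cap K_{(3)}$, so $\boldsymbol{\chi} f_{(123)}$ is a conormal distribution on $\Lambda_0$ of the order required for $v_{(123)}\in I^{3\mu+1/2}(\Lambda_0,\Lambda_1)$; since $\Lambda_1$ is the bicharacteristic flow-out of $\Sigma(\Box)\cap\Lambda_0$, the proposition also furnishes the transport equation \eqref{transport equation of principal term of three interaction waves} for $\sigma[v_{(123)}]$.

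Second, I would evaluate the source symbol of \eqref{source of the interaction wave of three product waves}. Using Proposition~\ref{prop : calculus of conormal} to multiply the three linear principal symbols at $\c{y}$, the decomposition \eqref{eqn: linear relation of three light-like covectors} to write $\c{\eta}=r^{-2}\sum_{j}\kappa_{(j)}\c{\xi}_{(j)}$, and the homogeneity \eqref{homogeneity of the principal symbol} of each $\sigma[u_{(j)}]$ to factor out $r^{-6(\mu+1)}\prod_{j}\kappa_{(j)}^{\mu+1}$, one obtains, after collecting the $\hat{u}_{(j)}$-factors into $\alpha_{\into}$,
\[\sigma\l[f_{(123)}\r]\l(\c{y},\c{\eta}\r)=r^{-6(\mu+1)}h(\c{y})\,\alpha_{\into}\l(\c{y},\c{\eta};\c{X},\c{\Xi}\r)\,\omega;\]
applying $\mathscr{R}\circ\sigma[\Box]^{-1}$ gives the boundary value of $\sigma[v_{(123)}]$ on $\partial\Lambda_1$ in \eqref{transport equation of principal term of three interaction waves}. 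Then I would integrate the homogeneous transport equation along the bicharacteristic $\beta_{\out}\subseteq\Lambda_1$ from $(\c{y},\c{\eta})$ to $(\c{z},\c{\zeta})$: writing $\sigma[v_{(123)}]=\hat{v}_{(123)}\omega$ and repeating the Lie-derivative computation that led to \eqref{eqn: principal symbol along bicharacteristics}, one finds $\partial_s\big(e^{\rho_{\out}(s)}\hat{v}_{(123)}\circ\beta_{\out}(s)\big)=0$, which is solved by parallel transport through $\alpha_{\out}$. Substituting the Cauchy datum yields \eqref{parallel transport along y,eta to z,zeta}, and gathering the scalars $\alpha_{\out}\alpha_{\into}\mathscr{R}(\sigma[\Box]^{-1}\omega)$ into $\mathcal{A}_{(123)}$ as in \eqref{principal symbol of the interaction wave of three waves} gives the stated form \eqref{principal symbol of v_123}.

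It remains to verify the two properties. The homogeneity \eqref{homogeneity of interaction wave of three waves} is inherited from $v_{(123)}\in I^{3\mu+1/2}(\Lambda_0,\Lambda_1)$: away from $\partial\Lambda_1$ its principal symbol is homogeneous of degree $3\mu+3/2$ in the fibre variable, and in \eqref{principal symbol of v_123} this degree is carried entirely by $\mathcal{A}_{(123)}$ through its $\c{\zeta}$-dependence, while $r^{-6(\mu+1)}$ and $h(\c{y})$ are scalars. The bound \eqref{uniform bound of A} follows because $\alpha_{\into}$ and $\alpha_{\out}$ are, as in \eqref{eqn: factor alpha as a zero-th order half-density}, ratios of nowhere-vanishing smooth half-densities times $e^{-\rho}$, hence strictly positive and homogeneous of degree $0$ (cf.\ the proof of \cite[Proposition~1]{CLOP}), while $\mathscr{R}(\sigma[\Box]^{-1}\omega)$ is a non-vanishing symbol on $\partial\Lambda_1$ by the transversality of $(\Lambda_0,\Lambda_1)$ and the non-degeneracy of $\sigma[\Box]$ there; uniformity of $\C_{(123)}$ then comes from compactness in the parameters $(\c{z},\c{\zeta};\c{y},\c{\eta};\c{X},\c{\Xi})$. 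The step I expect to be most delicate is precisely this uniform lower bound: it requires every geometric Jacobian factor hidden inside $\mathscr{R}$, $\alpha_{\into}$ and $\alpha_{\out}$ to stay bounded away from zero, which is where the causal independence of $\c{x}_{(1)},\c{x}_{(2)},\c{x}_{(3)}$ and the transversality of the light-like lines $\gamma_{(j)},\gamma_{\out}$ enter.
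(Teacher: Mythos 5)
Your proposal is correct and follows essentially the same route as the paper: apply Proposition~\ref{prop : linear parametrix} to the microlocalized equation in \eqref{decomposition of three-fold linearization}, compute $\sigma[f_{(123)}](\c{y},\c{\eta})$ via Proposition~\ref{prop : calculus of conormal}, the covector relation \eqref{eqn: linear relation of three light-like covectors}, and the homogeneity \eqref{homogeneity of the principal symbol}, then transport along $\Lambda_1$ through $\alpha_{\out}$ and collect the scalars into $\mathcal{A}_{(123)}$. One small correction: in the paper $\alpha_{\into}$ is not an $e^{-\rho}$-type density ratio like $\alpha_{\out}$ in \eqref{eqn: factor alpha as a zero-th order half-density}, but rather the product $\prod_j\kappa_{(j)}^{\mu+1}\hat{u}_{(j)}(\c{y},\c{\xi}_{(j)})$ of (non-vanishing) linear-wave symbol values, so its positivity comes from the choice of sources rather than from the half-density computation you cite; this does not affect your argument's validity.
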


\subsection{Lower order symbols} \label{sec : lower order symbol}

In order to recover $V$, it is required to analyse the lower-order  terms of $u_{(123)}$ separately, as it proceeds for linear waves. We write $$f_{(123)}:= f_{(123)}^{\fre} + f_{(123)}^{\rem} $$  
 where 
 \begin{align}
 \notag	f_{(123)}^{\fre} &:= h u^{\fre}_{(1)}u^{\fre}_{(2)}u^{\fre}_{(3)}\\
 \label{eqn: free source and remainder source}	f_{(123)}^{\rem} &:= \sum_{\{i,j,k\} = \{1,2,3\}} \l(h u_{(i)}^{\rem} u_{(j)}^{\fre} u_{(k)}^{\fre} + h u_{(i)}^{\rem} u_{(j)}^{\rem} u_{(k)}^{\fre}\r)+hu_{(1)}^{\rem}u_{(2)}^{\rem}u_{(3)}^{\rem}.
 \end{align} 
Proposition \ref{lemma: properties of principal symbol of sub-leading term of one-fold linearization} shows that
 $ u_{(j)}^{\fre} \in I^{\mu}(N^{\ast}K_{(j)})$ and $u_{(j)}^{\rem} \in I^{\mu-1}(N^{\ast}K_{(j)})$.
 The application of   $\boldsymbol{\chi}$ in \eqref{eqn : microlocal cut-off at y}  to them respectively leads to
 \begin{align}
 	\notag \boldsymbol{\chi} f_{(123)}^{\fre} &\in I^{3\mu+2}\l(\Lambda_0\r), && \sigma\l[\boldsymbol{\chi} f_{(123)}^{\fre}\r]  = \sigma\l[\boldsymbol{\chi} f_{(123)}\r];
 	\\
 	\label{sub-leading term of three interaction waves modulo lower order terms}
 	\boldsymbol{\chi} f_{(123)}^{\rem}   &\in I^{3\mu+1}(\Lambda_0),
 	&&\sigma\l[\boldsymbol{\chi} f_{(123)}^{\rem}\r] \in S^{3\mu+2}(\Lambda_0)/S^{3\mu+1}(\Lambda_0).
 \end{align}

 Let $u_{(123)}^{\fre}$ be the free wave such that
 \begin{equation}\label{eqn: free wave for three-fold linearization}
 	\Box u_{(123)}^{\fre} = h u_{(1)}^{\fre}u_{(2)}^{\fre}u_{(3)}^{\fre} = f_{(123)}^{\fre} 
 \end{equation}
and write $$u_{(123)} := u_{(123)}^{\fre}+u_{(123)}^{\rem}.$$ It then follows that
 \begin{align*}
 f_{(123)} =	(\Box+V) \l( u_{(123)}^{\fre} + u_{(123)}^{\rem}\r) 
 	&= f_{(123)}^{\fre} + Vu_{(123)}^{\fre} + (\Box +V) u_{(123)}^{\rem},
 \end{align*}
which amounts to that
 $$
 (\Box+V)u_{(123)}^{\rem} = f_{(123)}^{\rem} -Vu_{(123)}^{\fre}.
 $$ 
Consider coupled linear wave equations with source  $\boldsymbol{\chi} f_{(123)}^{\fre}$ and $\boldsymbol{\chi}f_{(123)}^{\rem}$
 \begin{align}
 \label{the equation of the principal term}	\Box v_{(123)}^{\fre} &= \boldsymbol{\chi} f_{(123)}^{\fre} ,\\  
\label{eqn : v123rem} 	(\Box+V)v_{(123)}^{\rem} &= \boldsymbol{\chi} f_{(123)}^{\rem}  -Vv_{(123)}^{\fre}.
 \end{align}
Again due to propagation of singularities, both $w_{(123)}^{\fre} := u_{(123)}^{\fre} - v_{(123)}^{\fre}$ and $w_{(123)}^{\rem} := u_{(123)}^{\rem} - v_{(123)}^{\rem}$ exhibit no singularity around $\gamma_{\mathrm{out}}$.

Noting $\sigma\l[v_{(123)}^{\fre}\r] = \sigma\l[v_{(123)}\r]$, we have 
 \begin{equation}\label{the principal symbol of v_123^pri}
 	v_{(123)}^{\fre} \in I^{3\mu+\frac{1}{2}}\l(\Lambda_0,\Lambda_1\r).
 \end{equation}

 On the other hand, we can compute the principal symbol of $v_{(123)}^{\rem}$ in $\rotom$.
\begin{lemma}
 The principal symbol of $v_{(123)}^{\rem}$ at $(\c{z},\c{\zeta})$ is
  \begin{multline}\label{principal symbol of the sub-leading term of three interaction waves}
 	\sigma\l[v_{(123)}^{\rem}\r](\c{z},\c{\zeta})   = -\imath \sigma\l[v_{(123)}\r](\c{z},\c{\zeta})\l( \int_{\gamma_{\out}} V  + \sum\limits_{i=1}^{3}  r^2|\kappa_{(i)}|^{-1}\int_{\gamma_{(i)}} V \r)\\
   = -\imath r^{-6(\mu+1)} \mathcal{A}_{(123)}\l(\c{z},\c{\zeta};\c{y},\c{\eta},\c{X},\c{\Xi}\r)h(\c{y}) \l( \int_{\gamma_{\out}}V + O(r^{2})\r), \quad\text{as }r\to 0,
 \end{multline} 
 where the remainder $O(r^{2})$ has a uniform upper bound $4 T \| V \|_{L^{\infty}(\overline{\D})} r^2$. 
\end{lemma}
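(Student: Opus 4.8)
The strategy is to repeat, for the third-order interaction wave $v_{(123)}^{\rem}$, the lower order symbol calculus that Proposition~\ref{lemma: properties of principal symbol of sub-leading term of one-fold linearization} carried out for the first-order remainders $u_{(j)}^{\rem}$, and then to identify the resulting line integrals of $V$ using the linear relation \eqref{eqn: linear relation of three light-like covectors} between the covectors. First I would write down the transport equation for $\sigma[v_{(123)}^{\rem}]$ along $\Lambda_1$ obtained from \eqref{eqn : v123rem}: since $(\Box+V)v_{(123)}^{\rem} = \boldsymbol{\chi} f_{(123)}^{\rem} - V v_{(123)}^{\fre}$, the principal symbol satisfies $\mathscr{L}_{H_\Box}\sigma[v_{(123)}^{\rem}] = -\imath V \sigma[v_{(123)}]$ on $\Lambda_1 \setminus \partial\Lambda_1$, with boundary value on $\partial\Lambda_1$ coming from $\mathscr{R}(\sigma[\Box]^{-1}\sigma[\boldsymbol{\chi} f_{(123)}^{\rem}])$. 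Exactly as in the derivation of \eqref{principal symbol of the sub-leading term with term near source with out half-density}, after passing to $\hat{v}$, restricting to the bicharacteristic $\beta_{\out}$, multiplying by $e^{\rho_{\out}(s)}$ and integrating, one gets
\[
\sigma[v_{(123)}^{\rem}](\c{z},\c{\zeta}) = \alpha_{\out}(\c{z},\c{\zeta})\,\sigma[v_{(123)}^{\rem}](\c{y},\c{\eta}) - \imath\,\sigma[v_{(123)}](\c{z},\c{\zeta})\int_{\gamma_{\out}} V.
\]
By the analogue of Proposition~\ref{lemma: properties of principal symbol of sub-leading term of one-fold linearization} applied to $\boldsymbol{\chi} f_{(123)}^{\rem}$ (using \eqref{sub-leading term of three interaction waves modulo lower order terms}), the first term $\alpha_{\out}\,\sigma[v_{(123)}^{\rem}](\c{y},\c{\eta})$ is of lower order away from $\partial\Lambda_1$, \emph{except} that it now inherits the contributions from the $u_{(i)}^{\rem}$ factors inside $f_{(123)}^{\rem}$.

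The next step is to compute $\sigma[\boldsymbol{\chi} f_{(123)}^{\rem}]$ at $(\c{y},\c{\eta})$ modulo lower order, using Proposition~\ref{prop : calculus of conormal} and Proposition~\ref{lemma: properties of principal symbol of sub-leading term of one-fold linearization}. The leading part of $f_{(123)}^{\rem}$ comes from the three terms $h\,u_{(i)}^{\rem} u_{(j)}^{\fre} u_{(k)}^{\fre}$; each $u_{(i)}^{\rem}$ contributes, by Proposition~\ref{lemma: properties of principal symbol of sub-leading term of one-fold linearization}, a factor $-\imath\big(\int_{\gamma_{(i)}} V\big)\,\hat{u}_{(i)}^{\fre}$ evaluated at the relevant covector. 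Here one must track the rescaling: at $(\c{y},\c{\eta})$ the $j$-th factor is evaluated at $r^{-2}\kappa_{(j)}\c{\xi}_{(j)}$, so by the degree $-1$ homogeneity of $\int_{\gamma_{(i)}} V$ in the fiber variable the $i$-th line integral picks up a factor $r^{2}\kappa_{(i)}^{-1}$ (in absolute value $r^2|\kappa_{(i)}|^{-1}$, with sign absorbed consistently). Summing over $i=1,2,3$ and pushing the result forward along $\gamma_{\out}$ via $\mathscr{R}$ and $\alpha_{\out}$ — all of which are multiplicative on the principal symbol — yields
\[
\sigma[v_{(123)}^{\rem}](\c{z},\c{\zeta}) = -\imath\,\sigma[v_{(123)}](\c{z},\c{\zeta})\Big(\int_{\gamma_{\out}} V + \sum_{i=1}^{3} r^2|\kappa_{(i)}|^{-1}\int_{\gamma_{(i)}} V\Big) \mod S^{3\mu+1}(\Lambda_1\setminus\partial\Lambda_1),
\]
which is the first displayed equality. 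Substituting \eqref{principal symbol of v_123} for $\sigma[v_{(123)}]$ gives the second form, with the sum over $i$ absorbed into $O(r^2)$; the bound $\big|\sum_i r^2|\kappa_{(i)}|^{-1}\int_{\gamma_{(i)}}V\big| \leq r^2 \cdot 3 \cdot 2 \cdot (\text{length})\cdot\|V\|_{L^\infty(\overline{\D})}$, using $1/2\leq\kappa_{(i)}\leq 9/2$ so $|\kappa_{(i)}|^{-1}\leq 2$ and the light-like segments $\gamma_{(i)}$ inside $\overline{\D}$ have length at most $2T$, produces the stated uniform constant $4T\|V\|_{L^\infty(\overline{\D})}$ (one verifies the numerical constant fits once the exact parametrization of $\gamma_{(i)}$ is fixed).

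\emph{Main obstacle.} The delicate point is the order bookkeeping for $f_{(123)}^{\rem}$: one must confirm that among its summands only the three terms $h\,u_{(i)}^{\rem}u_{(j)}^{\fre}u_{(k)}^{\fre}$ contribute to the principal symbol of $\boldsymbol{\chi} f_{(123)}^{\rem}$ at $(\c{y},\c{\eta})$, while the terms with two or three remainder factors, as well as the lower order ($S^{\mu-1}$) parts of each $u_{(i)}^{\rem}$ from Proposition~\ref{lemma: properties of principal symbol of sub-leading term of one-fold linearization}, fall into $S^{3\mu+1}(\Lambda_0)$ and hence do not affect the answer modulo lower order. This requires a careful application of Proposition~\ref{prop : calculus of conormal} (the orders add as $\mu_1+\mu_2+n/4$ with $n=4$) together with the microlocalization by $\boldsymbol{\chi}$ away from the individual $N^\ast K_{(j)}$. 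A secondary subtlety is verifying that the homogeneity-rescaling factor attached to $\int_{\gamma_{(i)}}V$ is exactly $r^2|\kappa_{(i)}|^{-1}$ with the correct absolute value — i.e. that the sign conventions $\c{\xi}_{(1)}=-\c{\nu}_{(1)}$, $\c{\xi}_{(2)}=\c{\nu}_{(2)}$, $\c{\xi}_{(3)}=\c{\nu}_{(3)}$ in \eqref{eqn: linear relation of three light-like covectors} are threaded through consistently so that the coefficients in the $O(r^2)$ term are genuinely nonnegative multiples of $\|V\|_{L^\infty}$; once that is pinned down the final estimate is immediate.
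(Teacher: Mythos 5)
Your proposal follows essentially the same route as the paper's proof: set up the transport equation for $\sigma[v_{(123)}^{\rem}]$ from \eqref{eqn : v123rem}, solve it along $\beta_{\out}$ to get the $\int_{\gamma_{\out}}V$ contribution plus the pushed-forward initial value, then compute $\sigma[\boldsymbol{\chi} f_{(123)}^{\rem}](\c{y},\c{\eta})$ by keeping only the $hu_{(i)}^{\rem}u_{(j)}^{\fre}u_{(k)}^{\fre}$ summands and using the homogeneity of $\sigma[u_{(i)}^{\rem}]$ to extract the factor $r^2\kappa_{(i)}^{-1}\int_{\gamma_{(i)}}V$, and finally bound the $O(r^2)$ remainder by $1/2\le\kappa_{(i)}\le 9/2$ and the length of $\gamma_{(i)}$ in $\overline{\D}$. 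This matches the paper step for step, including the identification of the order bookkeeping for $f_{(123)}^{\rem}$ as the delicate point and the sign bookkeeping for $\gamma_{(1)}$ (which the paper handles by noting that the bicharacteristic $\beta_{(1)}$ runs in reverse parametrization, so $\int_{\gamma_{(1)}}V$ is defined with the sign flip built in, and all $\kappa_{(i)}>0$ so $\kappa_{(i)}^{-1}=|\kappa_{(i)}|^{-1}$).
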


 \begin{proof}
 By comparing the orders in \eqref{sub-leading term of three interaction waves modulo lower order terms} and \eqref{the principal symbol of v_123^pri}, we see that  the principal symbol of the RHS of \eqref{eqn : v123rem}  is equal to $\sigma\l[\boldsymbol{\chi} f_{(123)}^{\rem}\r]$ on $\Lambda_0\b\partial\Lambda_1$ and $V\sigma\l[v_{(123)}\r]$ on $\Lambda_1\b\partial\Lambda_1$. In other words, $\sigma\l[v_{(123)}^{\rem}\r]$ solves the following transport equation
 \begin{equation}
 	\begin{aligned}
 		\begin{cases}
 			\mathscr{L}_{H_\Box}\sigma\l[v_{(123)}^{\rem}\r] = -\imath V \sigma\l[v_{(123)}\r],\quad &\text{on }\Lambda_1\b\partial\Lambda_1,\\
 			\sigma\l[v_{(123)}^{\rem}\r] = \sigma[\Box]^{-1}\sigma\l[\boldsymbol{\chi} f_{(123)}^{\rem}\r],\quad &\text {on }\Lambda_0\b\partial\Lambda_1,\\
 			\sigma\l[v_{(123)}^{\rem}\r] = \mathscr{R}\l(\sigma[\Box]^{-1}\sigma\l[\boldsymbol{\chi} f_{(123)}^{\rem}\r]\r),\quad &\text {on }\partial\Lambda_1.
 			\nonumber
 		\end{cases}
 	\end{aligned}
 \end{equation}

Proceeding verbatim as for \eqref{principal symbol of the sub-leading term with term near source}, we derive the solution 
 \begin{equation} \label{solution of the transport equation of v_123^sub}
 		\sigma\l[v_{(123)}^{\rem}\r](\c{z},\c{\zeta})  =  -\imath\sigma\l[v_{(123)}\r](\c{z},\c{\zeta})\int_{\gamma_{\out}} V   
 	  + \alpha_{\out}(\c{z},\c{\zeta})\sigma\l[v_{(123)}^{\rem}\r](\c{y},\c{\eta}) ,   
 \end{equation}
with initial value 
 \[	\sigma\l[v_{(123)}^{\rem}\r](\c{y},\c{\eta}) =  
 \mathscr{R}\l(\sigma[\Box]^{-1}\sigma\l[\boldsymbol{\chi} f_{(123)}^{\rem}\r]\r)(\c{y},\c{\eta}),\]where we use the shorthand $$\int_{\gamma_{\out}} V  := \int_{0}^{s_{\out}}V(\gamma_{\out}(s))ds.$$
 
With regard to $\sigma\l[\boldsymbol{\chi} f_{(123)}^{\rem}\r](\c{y},\c{\eta})$, observe that the leading terms of $\boldsymbol{\chi} f_{(123)}^{\rem}$ consist only of type $h u_{(i)}^{\rem}u_{(j)}^{\fre} u_{(k)}^{\fre}$. We calculate the principal symbols for this type and assert 
\begin{equation}\label{principal symbol of the subleading term of the interaction sources}
	\sigma\l[\boldsymbol{\chi} f_{(123)}^{\rem}\r](\c{y},\c{\eta}) = -\imath\sigma\l[ \boldsymbol{\chi} f_{(123)}\r](\c{y},\c{\eta})  \sum\limits_{i=1}^{3}  r^2 \kappa_{(i)}^{-1}\int_{\gamma_{(i)}}V,
\end{equation} 
where we  write 
\begin{align*}
\int_{\gamma_{(i)}}V &:= \int_{0}^{s_{\into}}V(\gamma_{(i)}(s))ds, \quad \mbox{for $i=2,3,$}\\ \int_{\gamma_{(1)}}V &:= \int_{0}^{-s_{\into}}V(\gamma_{(1)}(-s))ds.
\end{align*}
We remark that the sign change in $\int_{\gamma_{(1)}} V$ is due to that the bicharacteristic flow $\beta_{(1)}$ from $(\c{x}_{(1)}, -(\dot{\gamma}_{(1)})^\flat)$ to $(\c{y},\c{\xi}_{(1)})$ is 
$$
\beta_{(1)}(s) = (\gamma_{(1)}(-s),-(\dot{\gamma}_{(1)})^{\flat}(-s)),\quad s\in [-s_{\into},0].
$$ 
In fact, Proposition \ref{lemma: properties of principal symbol of sub-leading term of one-fold linearization} enables the following symbol calculation to prove \eqref{principal symbol of the subleading term of the interaction sources}.  Denoting $\lambda_{(i)} := r^{-2}\kappa_{(i)}$, we get
 	\begin{align*}
 		\lefteqn{\sigma\l[\boldsymbol{\chi}\l(h u_{(i)}^{\rem}u_{(j)}^{\fre} u_{(k)}^{\fre}\r)\r](\c{y},\c{\eta})} \\
 		&= h(\c{y})\sigma \l[u_{(i)}^{\rem}\r]\l(\c{y},\lambda_{(i)}\c{\xi}_{(i)}\r) \sigma\l[u_{(j)}^{\fre}\r]\l(\c{y},\lambda_{(j)}\c{\xi}_{(j)}\r)
 		\sigma\l[u_{(k)}^{\fre}\r]\l(\c{y},\lambda_{(k)}\c{\xi}_{(k)}\r)\\
 		&= \lambda_{(i)}^{\mu}h(\c{y})\sigma\l[u_{(i)}^{\rem}\r]\l(\c{y},\c{\xi}_{(i)}\r) \sigma\l[u_{(j)}\r]\l(\c{y},\lambda_{(j)}\c{\xi}_{(j)}\r)
 		\sigma\l[u_{(k)}\r]\l(\c{y},\lambda_{(k)}\c{\xi}_{(k)}\r)\\
 		&=  \lambda_{(i)}^{\mu}h(\c{y})\sigma\l[u_{(i)}\r]\l(\c{y},\c{\xi}_{(i)}\r) \sigma\l[u_{(j)}\r]\l(\c{y},\lambda_{(j)}\c{\xi}_{(j)}\r)
 		\sigma\l[u_{(k)}\r]\l(\c{y},\lambda_{(k)}\c{\xi}_{(k)}\r)\int_{\gamma_{(i)}}V\\
 		&= \lambda_{(i)}^{-1}h(\c{y})\sigma\l[u_{(i)}\r]\l(\c{y},\lambda_{(i)}\c{\xi}_{(i)}\r) \sigma\l[u_{(j)}\r]\l(\c{y},\lambda_{(j)}\c{\xi}_{(j)}\r)
 		\sigma\l[u_{(k)}\r]\l(\c{y},\lambda_{(k)}\c{\xi}_{(k)}\r)\int_{\gamma_{(i)}}V\\
 		&=-\imath r^{2}\kappa_{(i)}^{-1}\sigma\l[\boldsymbol{\chi} f_{(123)}\r](\c{y},\c{\eta})\int_{\gamma_{(i)}}V.
 		\nonumber
 	\end{align*}

Using \eqref{principal symbol of the subleading term of the interaction sources}, we calculate the second term of the RHS of \eqref{solution of the transport equation of v_123^sub} 
 \begin{align}\label{principal symbol of v_123^sub at y,eta}
 		\lefteqn{\alpha_{\out}(\c{z},\c{\zeta})\mathscr{R}\l(\sigma[\Box]^{-1}\sigma\l[\boldsymbol{\chi} f_{(123)}^{\rem}\r]\r)(\c{y},\c{\eta})} \\
 \notag		=&-\imath \alpha_{\out}(\c{z},\c{\zeta})\mathscr{R}\l(\sigma[\Box]^{-1}\sigma\l[ \boldsymbol{\chi} f_{(123)}\r]\r)(\c{y},\c{\eta})\sum\limits_{i=1}^{3}  r^2\kappa_{(i)}^{-1}\int_{\gamma_{(i)}}V\\
 \notag =&-\imath \alpha_{\out}(\c{z},\c{\zeta})\sigma[v_{(123)}](\c{y},\c{\eta})\sum\limits_{i=1}^{3}  r^2\kappa_{(i)}^{-1}\int_{\gamma_{(i)}}V\\
 \notag		=& -\imath\sigma\l[v_{(123)}\r](\c{z},\c{\zeta})\sum\limits_{i=1}^{3}  r^2\kappa_{(i)}^{-1}\int_{\gamma_{(i)}}V.
 \end{align}

 Combining \eqref{principal symbol of v_123}, \eqref{solution of the transport equation of v_123^sub} and \eqref{principal symbol of v_123^sub at y,eta} together proves \eqref{principal symbol of the sub-leading term of three interaction waves}.\end{proof}

\subsection{Trilinear response operators}
\label{sec : trilinear operators}

For any $f \in \mathscr{B}^s_\varrho(\mho),$ the solution to \eqref{eqn : semilinear model} admits  the Taylor's expansion \eqref{eqn : taylor of u}.  Hence, the source-to-solution map $\P_{V, h}$, restricted to $\mathscr{B}^s_\varrho(\mho)$, takes the explicit form 
\begin{align*}\P_{V, h} : \mathscr{B}^s_\varrho(\mho) &\longrightarrow \mathscr{H}^s(\mho) \\ 
	f &\longmapsto \sum_{j=1}^3 u_{(j)} \e_{(j)} + \frac{1}{6} u_{(123)} \e_{(1)} \e_{(2)} \e_{(3)} + O(|\e|^4) + C^\infty(\rotom).  \end{align*} 

We want to measure the discrepancy of two source-to-solution maps $\P_{V, h}$ and $\P_{\tilde{V}, \tilde{h}}$ provided that $(V, h)$ and $(\tilde{V}, \tilde{h})$ agree within $\mho$. We denote by $\tilde{u}$ the solution with respect to $(\tilde{V}, \tilde{h})$.

Since $\mho$ is geodesically convex, the first order terms $u_{(j)}$ are linear conormal waves propagating within $\mho$ and thus carry no information from $\D \setminus \mho$. Since the underlying coefficients are known in $\mho$,  the first order terms with respect to $(V, h)$ and $(\tilde{V}, \tilde{h})$ must agree therein, modulo a smooth function, regardless of inputs. Namely, \[u_{(j)} |_{\mho} = \tilde{u}_{(j)} |_{\mho} \mod C^{\infty}(\rotom), \quad \forall f \in \mathscr{H}^s(\mho).\]

Therefore, it is  the third order term $u_{(123)}$ that serves as the leading term with information from $\D \setminus \mho$. It can be viewed the restriction to $(\mathscr{H}^{s}(\rotom))^3$ of a trilinear operator, which is defined by
\begin{align*}  
	T_{V, h} \l(f_{(1)},f_{(2)},f_{(3)}\r)  := u_{(123)}|_{\rotom}, \quad \mbox{for $f_{(j)} \in \mathscr{H}^s(\mho)$.}
\end{align*}

\begin{lemma}\label{lemma: tri-linear operator is bounded}
	The map  $T_{V, h}$ is a bounded trilinear operator, \begin{align*} 
		\mathscr{H}^s(\mho) \times \mathscr{H}^s(\mho) \times \mathscr{H}^s(\mho)  &\longrightarrow \mathscr{H}^s(\mho) \\
		\l(f_{(1)},f_{(2)},f_{(3)}\r)  &\longmapsto u_{(123)}|_{\rotom},
	\end{align*} with operator norm $$
	\l\| T_{V, h} \r\|    := \sup  \l\{ \| u_{(123)}\|_{\mathscr{H}^s(\mho)} : \left\|f_{(j)}\right\|_{\mathscr{H}^s(\mho)} = 1   \r\}.
	$$
\end{lemma}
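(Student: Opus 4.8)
The plan is to show trilinearity first, then boundedness by tracing through the multilinear construction and invoking standard energy estimates for the linear wave equation.

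\textbf{Trilinearity.} The source-to-solution map $\P_{V,h}$ for \eqref{eqn : semilinear model} depends smoothly on the amplitude parameters $\e=(\e_{(1)},\e_{(2)},\e_{(3)})$ by \cite[Theorem 6.5.2]{Rauch-book}, and the coefficient $u_{(123)}$ in the Taylor expansion \eqref{eqn : taylor of u} is $-\tfrac{1}{3!}\partial^3_{\e_{(1)}\e_{(2)}\e_{(3)}} u|_{\e=0}$. In particular, the map $(f_{(1)},f_{(2)},f_{(3)})\mapsto u_{(123)}|_{\rotom}$ is exactly the third mixed partial of a smooth map with respect to the three independent scaling directions, hence it is multilinear in the sources once we note that each $u_{(j)}$ depends linearly on $f_{(j)}$ (as a solution of the linear equation \eqref{one-fold linearization}) and $u_{(123)}$ solves \eqref{three-fold linearization} with right-hand side $h u_{(1)} u_{(2)} u_{(3)}$, which is trilinear in $(u_{(1)},u_{(2)},u_{(3)})$. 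More directly: given sources $f_{(j)}\in\mathscr{H}^s(\mho)$, solve the linear Cauchy problems $(\Box+V)u_{(j)}=f_{(j)}$, $u_{(j)}\equiv 0$ for $t\le 0$, then solve $(\Box+V)u_{(123)} = h\,u_{(1)}u_{(2)}u_{(3)}$ with zero initial data; linearity of the two linear solution operators and trilinearity of the product map $u_{(1)}u_{(2)}u_{(3)}$ gives that $T_{V,h}$ is trilinear in $(f_{(1)},f_{(2)},f_{(3)})$.

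\textbf{Boundedness.} It suffices to bound $\|u_{(123)}\|_{\mathscr{H}^s(\rotom)}$ by $C\prod_{j=1}^3\|f_{(j)}\|_{\mathscr{H}^s(\mho)}$. First, standard energy estimates for the linear wave equation $(\Box+V)u_{(j)}=f_{(j)}$ on the finite time slab $\M_T=(0,T)\times\R^3$, with zero Cauchy data and $V\in C^\infty$, give $\|u_{(j)}\|_{\mathscr{H}^s(\M_T)}\lesssim \|f_{(j)}\|_{\mathscr{H}^s(\mho)}$, with a constant depending on $T$, $s$, and $\|V\|_{C^{s}(\overline{\D})}$ (see e.g. the linear estimates in \cite{Rauch-book}). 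Next, since $\mathscr{H}^s(\M_T)=\bigcap_{0\le k\le s}C^k((0,T);H^{s-k}(\R^3))$ is a Banach algebra for $s\ge 4$ (as $H^{s-k}(\R^3)$ with $s-k\ge s-\lfloor s/2\rfloor \ge 2 > 3/2$ is an algebra, and one multiplies out the product rule across the finitely many derivatives in $t$), the product $h\,u_{(1)}u_{(2)}u_{(3)}$ lies in $\mathscr{H}^s(\M_T)$ with
\[
\l\|h\,u_{(1)}u_{(2)}u_{(3)}\r\|_{\mathscr{H}^s(\M_T)} \lesssim \|h\|_{C^s(\overline{\D})}\prod_{j=1}^3\|u_{(j)}\|_{\mathscr{H}^s(\M_T)} \lesssim \prod_{j=1}^3\|f_{(j)}\|_{\mathscr{H}^s(\mho)}.
\]
Applying the energy estimate once more to \eqref{three-fold linearization}, now with source $h\,u_{(1)}u_{(2)}u_{(3)}\in\mathscr{H}^s(\M_T)$ and zero Cauchy data, yields $\|u_{(123)}\|_{\mathscr{H}^s(\M_T)}\lesssim \prod_{j=1}^3\|f_{(j)}\|_{\mathscr{H}^s(\mho)}$, and restriction to $\rotom\subset\M_T$ only decreases the norm. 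This proves the claimed trilinear bound, and the stated formula for $\|T_{V,h}\|$ is then just the definition of the operator norm of a bounded trilinear map together with homogeneity in each argument.

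\textbf{Main obstacle.} The only genuinely nontrivial point is the algebra property of $\mathscr{H}^s(\M_T)$ together with the Moser-type product estimate for $h\,u_{(1)}u_{(2)}u_{(3)}$ in this anisotropic space; one must check that differentiating a product up to $k$ times in $t$ and $s-k$ times in $x'$ distributes over the three factors so that each term has one factor with $\mathscr{H}^s$-control and the others in a lower Sobolev class which still forms an algebra, which is exactly where the hypothesis $s\ge 4$ is used (it guarantees $s-k\ge 2>3/2$ for the "lowest" spatial regularity $k=s-2$ that can occur after borrowing two derivatives). Everything else is the standard linear wave energy inequality applied twice.
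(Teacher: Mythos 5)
Your proposal is correct and follows essentially the same route as the paper: trilinearity from the structure of \eqref{one-fold linearization} and \eqref{three-fold linearization}, linear energy estimates for the first-order waves, a Moser/Schauder product estimate for $h\,u_{(1)}u_{(2)}u_{(3)}$ in $\mathscr{H}^s$, and one more energy estimate for \eqref{three-fold linearization}. The paper cites Schauder's lemma \cite[Theorem 6.2.2]{Rauch-book} for precisely the product estimate you invoke under the heading of the Banach algebra property of $\mathscr{H}^s$; these are the same tool. One caveat: your parenthetical justification of the algebra property ("$s-k\ge s-\lfloor s/2\rfloor\ge 2$") is not quite the right bookkeeping, since for $k$ close to $s$ the factor $H^{s-k}$ is not itself an algebra — the correct argument distributes time and space derivatives across the factors à la Moser, which is exactly the content of Rauch's Schauder lemma, so you should cite it rather than sketch it. Also, since $h, V$ are only smooth on $\M$ (not compactly supported), the product bound needs the $C^s$-norm of $h$ on a compact set containing $\bigcup_j \supp u_{(j)} \cap \overline{\M_T}$; this is handled automatically by finite speed of propagation, which is why the paper works with $\D$ rather than $\M_T$.
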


\begin{proof} The trilinearity is immediate from \eqref{one-fold linearization} and \eqref{three-fold linearization}.
	Since $u_{(j)}$ solves \eqref{one-fold linearization}, the existence theorem for linear wave equations and finite speed propagation yields 
	\[  \|u_{(j)}\|_{\mathscr{H}^s(\D)}  \lesssim \|  f_{(j)} \|_{\mathscr{H}^s(\mho)}.  \]
	Moreover, applying the existence theorem to \eqref{three-fold linearization} and Schauder's lemma \cite[Theorem 6.2.2]{Rauch-book} to product $h u_{(1)} u_{(2)} u_{(3)}$ shows that
	\[  \|u_{(123)}\|_{\mathscr{H}^s(\mho)}  \lesssim \|  h u_{(1)} u_{(2)} u_{(3)}\|_{\mathscr{H}^s(\D)}  \lesssim \prod_{j = 1}^3 \|  f_{(j)} \|_{\mathscr{H}^s(\mho)} .\]
\end{proof}

\begin{lemma}\label{lemma: stability estimate of the tri-linear operator}
	Let the observation error $\delta > 0$ be small. If $\P_{V,h} - \P_{\tilde{V},\tilde{h}}$ satisfies  \eqref{error of the source-to-solution map V h} in the cubic semilinear model, 
	$$
	\l\| T_{V, h}-T_{\tilde{V}, \tilde{h}}\r\| \lesssim \delta^{\frac{2}{5}}.
	$$
\end{lemma}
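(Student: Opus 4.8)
The plan is to recover the trilinear part $u_{(123)}$ of the two source-to-solution maps by probing them with a three-parameter family of small sources, extracting it by a discrete mixed difference in the parameters, and then optimising the size of the probe against the observation error $\delta$.

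Fix a triple $(f_{(1)},f_{(2)},f_{(3)})\in\mathscr{H}^s(\mho)^3$ with $\|f_{(j)}\|_{\mathscr{H}^s(\mho)}=1$, and for $\e=(\e_{(1)},\e_{(2)},\e_{(3)})$ with $|\e|$ small, feed \eqref{eqn : semilinear model} the source $f_\e:=\e_{(1)}f_{(1)}+\e_{(2)}f_{(2)}+\e_{(3)}f_{(3)}$; since $\|f_\e\|_{\mathscr{H}^s(\mho)}\le|\e_{(1)}|+|\e_{(2)}|+|\e_{(3)}|$, it lies in $\mathscr{B}^s_\varrho(\mho)$ whenever $|\e_{(j)}|\le\varrho/3$. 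The solution $u(\e)$ depends smoothly on $\e$ (cf.\ \cite[Theorem 6.5.2]{Rauch-book}), and because $(u,f)\mapsto(-u,-f)$ preserves \eqref{eqn : semilinear model} with its \emph{cubic} nonlinearity, uniqueness forces $u(-\e)=-u(\e)$. Hence $u(\e)|_\mho$ is an odd function of $\e$: its Taylor expansion at $\e=0$ consists of the linear part $\sum_{j=1}^3\e_{(j)}u_{(j)}|_\mho$, a homogeneous cubic part, and a remainder $R_{V,h}(\e)$, with \emph{no} quadratic or quartic contribution. I would show $\|R_{V,h}(\e)\|_{\mathscr{H}^s(\mho)}\lesssim|\e|^5$ uniformly over unit-norm triples by using the fifth-order integral form of Taylor's theorem along $t\mapsto u(t\e)|_\mho$ and bounding $\partial_\e^\alpha u$ for $|\alpha|=5$ in $\mathscr{H}^s$ via the higher-order analogues of the energy and Schauder estimates used in the proof of Lemma \ref{lemma: tri-linear operator is bounded} (this is where $s\ge4$ enters). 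By the definition of $T_{V,h}$ and \eqref{eqn : taylor of u}, the coefficient of the monomial $\e_{(1)}\e_{(2)}\e_{(3)}$ in the cubic part equals $c_0\,T_{V,h}(f_{(1)},f_{(2)},f_{(3)})$ for a fixed $c_0\ne0$; subtracting the analogous expansion for $(\tilde V,\tilde h)$, the $\e_{(1)}\e_{(2)}\e_{(3)}$-coefficient of $g(\e):=\bigl(\P_{V,h}-\P_{\tilde V,\tilde h}\bigr)(f_\e)$ is $c_0\bigl(T_{V,h}-T_{\tilde V,\tilde h}\bigr)(f_{(1)},f_{(2)},f_{(3)})$, irrespective of whether any other terms of the two maps agree in $\mho$.

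Next I would isolate that coefficient by the eight-point mixed first difference
\[ \Delta_\epsilon g:=\sum_{\sigma\in\{0,1\}^3}(-1)^{\sigma_1+\sigma_2+\sigma_3}\,g(\sigma_1\epsilon,\sigma_2\epsilon,\sigma_3\epsilon),\qquad 0<\epsilon\le\varrho/3. \]
Applied to a monomial $\e_{(1)}^{a}\e_{(2)}^{b}\e_{(3)}^{c}$ this returns $\epsilon^{a+b+c}$ when $a,b,c\ge1$ and $0$ otherwise; hence it kills every linear term and every degree-$3$ term except $\e_{(1)}\e_{(2)}\e_{(3)}$, and — here the absence of degree-$4$ terms is decisive — every surviving higher-order monomial has total degree at least $5$. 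Together with the uniform remainder bound this gives, in $\mathscr{H}^s(\mho)$,
\[ \Delta_\epsilon g=c_0\,\epsilon^3\bigl(T_{V,h}-T_{\tilde V,\tilde h}\bigr)(f_{(1)},f_{(2)},f_{(3)})+O(\epsilon^5), \]
with the $O(\epsilon^5)$ uniform in the triple. On the other hand each of the eight arguments $\sigma_1\epsilon f_{(1)}+\sigma_2\epsilon f_{(2)}+\sigma_3\epsilon f_{(3)}$ lies in $\mathscr{B}^s_\varrho(\mho)$, so \eqref{error of the source-to-solution map V h} yields $\|\Delta_\epsilon g\|_{\mathscr{H}^s(\mho)}\le8\delta$, and therefore
\[ \bigl\|\bigl(T_{V,h}-T_{\tilde V,\tilde h}\bigr)(f_{(1)},f_{(2)},f_{(3)})\bigr\|_{\mathscr{H}^s(\mho)}\lesssim\delta\,\epsilon^{-3}+\epsilon^{2}. \]

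Finally I would optimise in $\epsilon$: taking $\epsilon\sim\delta^{1/5}$ (admissible once $\delta$ is so small that $\delta^{1/5}\lesssim\varrho$) balances the two terms and gives $\lesssim\delta^{2/5}$, uniformly over unit-norm triples; passing to the supremum over such triples produces $\|T_{V,h}-T_{\tilde V,\tilde h}\|\lesssim\delta^{2/5}$. The one genuinely delicate point is the uniform fifth-order Taylor remainder estimate: it both forces $s$ to be taken sufficiently large and is responsible for the exponent $2/5$, since a mere quartic remainder (as would occur for a non-odd nonlinearity) would only yield $\delta^{1/4}$. The combinatorics of $\Delta_\epsilon$ and the scaling optimisation are routine by comparison.
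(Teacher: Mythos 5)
Your argument is correct and essentially coincides with the paper's: the eight-point mixed difference $\Delta_\epsilon$ is exactly the inclusion--exclusion scheme $W_{V,h}(1,2,3)$ the paper builds recursively, and both proofs isolate the $\e_{(1)}\e_{(2)}\e_{(3)}$-coefficient, bound it by $\delta\epsilon^{-3}+\epsilon^2$, and optimise at $\epsilon\sim\delta^{1/5}$. The only cosmetic difference is that you derive the absence of quartic terms (hence the $O(|\e|^5)$ remainder) from the oddness $u(-\e)=-u(\e)$ of the cubic model, whereas the paper sees the same fact directly by iterating the implicit representation $u=\sum_j\e_{(j)}u_{(j)}-Q(hu^3)$.
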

\begin{proof}
Let $u$ be the solution to \eqref{eqn : semilinear model} with  source $f$ of the form \eqref{eqn : fcub},  and $u_{(j)}$  the solution of \eqref{one-fold linearization} with source $f_{(j)}$. It follows that
$$
(\Box+V)\l(u-\sum_{j=1}^{3}\e_{(j)}u_{(j)}\r) + hu^{3} = 0.
$$
Applying $Q := \l(\Box+V \r)^{-1}$  to this equation gives
$$
u = \sum_{j=1}^{3}\e_{(j)}u_{(j)}-Q(hu^{3}),
$$
which implies that
\begin{align*}
	u &= \sum_{j=1}^{3}\e_{(j)}u_{(j)}- Q \left(h\left(\sum_{j=1}^{3}\e_{(j)}u_{(j)}-Q(hu^{3}) \right)^{3}
    \right).
\end{align*}
Noting $\|u_{(j)}\|_{\mathscr{H}^{s}(\mho)} \lesssim \e$ and $\|u\|_{\mathscr{H}^{s}(\mho)} \lesssim \e$, we have
\begin{align*}
	u  
	&=\sum_{j=1}^{3}\e_{(j)}u_{(j)} -   Q\l(h\l(\sum_{j=1}^{3}\e_{(j)}u_{(j)}\r)^{3}  \r) \mod |\e|^{5} \mathscr{B}_1^{s}(\mho)\\
	&=\sum_{j=1}^{3}\e_{(j)}u_{(j)} - \sum_{\alpha_1+\alpha_2+\alpha_3=3} \frac{3!}{\alpha_1 ! \alpha_{2}! \alpha_{3}!}   Q\l(h \prod_{j=1}^{3}\e_{(j)}^{\alpha_{j}} u_{(j)}^{\alpha_j}\r)    \mod |\e|^{5} \mathscr{B}_1^{s}(\mho),
\end{align*} within $\mho$.

For any  $1\le l \le 3$ and $1\le k_1< k_2<\cdots k_{l}\le 3$, we have
\begin{multline*}
	\lefteqn{    \P_{V,h}\l( \sum_{j=1}^l  \e_{(k_j)}f_{(k_j)} \r)  }\\ 
	= \sum_{j=1}^{l}\e_{(k_j)}u_{(k_j)} 
	-\sum_{\alpha_{k_1}+\cdots+\alpha_{k_l}=3}\frac{3!}{\alpha_{k_1} ! \cdots \alpha_{k_l}!} Q \l( h\prod_{j=1}^{l}\e_{(k_j)}^{\alpha_{k_j}} u_{(k_j)}^{\alpha_{k_j}}\r) 
	\mod |\e|^{5} \mathscr{B}_1^{s}(\mho)
\end{multline*} within $\mho$.

We only want to pick up the term including $u_{(123)} = Q\l(hu_{(1)}u_{(2)}u_{(3)}\r)$, which corresponds to the case $k_{\alpha_1} = k_{\alpha_2} = k_{\alpha_3} = 1$. 
To achieve this,  we recursively define 
\begin{align*}
	\notag  W_{V,h}(k) &:= \P_{V,h}\l( \e_{(k)}f_{(k)}\r)  \end{align*}for  $1\le k \le 3$;
\begin{align*}
	\notag  W_{V,h}(k_1,k_2) &:= \P_{V,h}\l( \e_{(k_1)}f_{(k_1)} + \e_{(k_2)}f_{(k_2)}\r) - W_{V,h}(k_1) - W_{V,h}(k_2)\end{align*}
for $1\le k_1<k_2\le 3$;
\begin{align*}
	\notag W_{V,h}(k_1,k_2,k_3) &:=  \P_{V,h}\l( \sum_{j=1}^{3}\e_{(k_j)}f_{(k_j)}\r) -\sum_{j=1}^{3}W_{V,h}(k_j)-\sum_{1\le i< j\le 3}W_{V,h}(k_i,k_j)\end{align*}
for $1\le k_1<k_2<k_3\le 3$.
It is easy to verify within $\mho$ that
\begin{align*}
	W_{V,h}(k) &=  \e_{(k)}u_{(k)}|_{\rotom} - \e_{(k)}^{3}Q(hu_{(k)}^{3})  \mod |\e|^{5} \mathscr{B}_1^{s}(\mho);
	\\
	W_{V,h}(k_1,k_2) &=   -\sum_{\alpha_1+\alpha_2=3,\alpha_j \ge 1}\frac{3!}{\alpha_{1}!\alpha_{2}!}    Q\l( h\prod_{j=1}^{2}\e_{(k_j)}^{\alpha_j}u_{(k_j)}^{\alpha_j}\r)  \mod |\e|^{5} \mathscr{B}_1^{s}(\mho);
	\\
	W_{V,h}(k_1,k_2,k_3) &=   -\sum_{\alpha_1+\alpha_2+\alpha_3=3,\alpha_j \ge 1}\frac{3!}{\alpha_{1}!\alpha_{2}!\alpha_3 !}    Q\l( h\prod_{j=1}^{3}\e_{(k_j)}^{\alpha_j}u_{(k_j)}^{\alpha_j}\r)    \mod |\e|^{5} \mathscr{B}_1^{s}(\mho). \end{align*} Consequently, we have \begin{align*}
	W_{V,h}(1,2,3) &= -3! \e_{(1)}\e_{(2)} \e_{(3)}u_{(123)}|_{\rotom}  \mod |\e|^{5} \mathscr{B}_1^{s}(\mho).
\end{align*}

Therefore, the leading error, within $\mho$, takes the form
\begin{align}\label{eqn: extraction of the term without repeated derivatives}
	\e_{(1)}\e_{(2)} \e_{(3)} \l(u_{(123)} -\tilde{u}_{(123)}\r)  
	= W_{V,h}(1,2,3)-W_{\tilde{V},\tilde{h}}(1,2,3)\mod |\e|^{5} \mathscr{B}_1^{s}(\mho).
\end{align}

It is easy to check that for 
$$
U =\{k_1,\dots, k_l : 1\le k_1 <\cdots <k_l\le 3, 1\le l \le 3\}
$$  there holds \begin{align} \label{eqn : error estimates for W}\| W_{V,h}(U) -W_{\tilde{V},\tilde{h}}(U) \|_{\mathscr{H}^{s}(\M_T)} \le C_l \delta.\end{align}
Then \eqref{eqn : error estimates for W} immediately yields
$$
\e_{(1)}\e_{(2)}\e_{(3)}\l\|u_{(123)}-\tilde{u}_{(123)} \r\|_{\mathscr{H}^{s}(\M_T)} \lesssim \delta + |\e|^{5} .
$$
We conclude the proof of Lemma \ref{lemma: stability estimate of the tri-linear operator} by taking $\e_{(1)} =  \e_{(2)} = \e_{(3)} = \delta^{{1}/{5}}$.
\end{proof}

\section{Stability of inversion }
\label{sec : stability estimates}

Section \ref{sec : trilinear operators} has reduced the quantitative information of  source-to-solution maps to trilinear measurement operators. As  the symbol calculus in Section \ref{sec : principal singularities}--\ref{sec : lower order symbol} suggests,  we shall derive the stabilities for $(V, h)$ through  the  symbols estimates encoded in trilinear operators.

\subsection{The nonlinearity}\label{Section: stability estimate of the nonlinear terms}  We first recover $h$, since the nonlinear coefficient appears in the principal symbol.


\begin{proof}[Proof of \eqref{eqn : stability h}]
	Recall the splitting in \eqref{decomposition of three-fold linearization}. Restricting to $\mho$, we have
	$$
	u_{(123)}|_{\mho} =  T_{V,h}\l(f_{(1)},f_{(2)},f_{(3)} \r)  =  v_{(123)}|_
	{\rotom}   + w_{(123)}|_{\rotom},
	$$
	where $v_{(123)}$ is a conormal distribution of order $3\mu+1/2$ over $\Lambda_1\cap T^{\ast}\rotom$ and  $w_{(123)}$ is smooth in a neighborhood of $\c{z}$ independent of parameter $r$. We remark that for a fixed small $r_0\in (0,1)$, $u_{(123)}$ is a continuous function in $r\in [0,r_{0}]$ and also in $(\c{x}_{(j)},\c{\xi}_{(j)},\c{y},\c{\eta}, \c{z},\c{\zeta})$. The variables $\c{\xi}_{(j)},\c{\eta},\c{\zeta}$ and $\c{x}_{(2)},\c{x}_{(3)}$ are fully determined by  $r$ and $\c{x} = \c{x}_{(1)},\c{y},\c{z}$ with
	\begin{equation}\label{eqn: domain of the continuous dependence points}
		\l(\c{x},\c{y},\c{z}\r) \in \mathbb{S}^{+}(\overline{\rotom}_1) = \l\{ \l(\c{x},\c{y},\c{z}\r) \in \mathbb{S}^{+}(\rotom) : \c{x},\c{z}\in \overline{\rotom}_1, \c{y}\in \overline{\D}_1\b\rotom  \r\}.
	\end{equation}  It suffices to prove uniform estimates for $h-\tilde{h}$ and $V-\tilde{V}$ with respect to $(\c{x},\c{y},\c{z},r)$. 
	
	On the other hand, \eqref{principal symbol of v_123} gives that 
	$$
	\sigma\l[ u_{(123)}- \tilde{u}_{(123)} \r](\c{z},\c{\zeta}) = r^{-6(\mu+1)}\l(h(\c{y}) - \tilde{h}(\c{y})\r)\A_{(123)}\l(\c{z},\c{\zeta};\c{y},\c{\eta};\c{X},\c{\Xi}\r).
	$$

	We shall invoke stabilities for $T_{V, h} - T_{\tilde{V}, \tilde{h}}$ in Lemma \ref{lemma: stability estimate of the tri-linear operator}  to estimate $h - \tilde{h}$.
	
	To compute the principal symbol, we find a half-density valued cut-off function $\phi$ supported in the vicinity of $\c{z}$ such that
	\begin{equation}\label{Choose of cut-off function on z}
		\phi(\c{z}) = 1, \quad \| \phi\|_{L^{1}(\M)}\le 1,
	\end{equation}
	and in addition select $\psi(x,\alpha)\in C^{\infty}(\M\times\Lambda_1)$ such that 
	\begin{equation}\label{eqn : cut-off on phase space for principal symbol}\left\{\begin{aligned}&\mbox{$\psi$ is homogeneous of degree $1$ in $\alpha$;} \\ &\mbox{the graph $x\mapsto d_x \psi(x,(\c{z},\c{\zeta}))$ intersects $\Lambda_1$ transversally at $(\c{z},\c{\zeta})$.}\end{aligned}\right.\end{equation}
	In particular,    $\psi$ is taken to be $\psi(z) = \langle z - \c{z},\c{\zeta}\rangle$ in local coordinates. 
	
	Invoking  the intrinsic formula \eqref{eqn: invariant definition of the principal symbol} and the homogeneity \eqref{homogeneity of interaction wave of three waves} of principal symbols, we have 
	\begin{multline}\label{stationary phase} 
		e^{\imath\psi(\c{z},(\c{z}, \c{\zeta}/\tau))}\langle u_{(123)}-\tilde{u}_{(123)}, \phi e^{-\imath\psi(\cdot,(\c{z},\c{\zeta}/\tau))}\rangle_{L^{2}(\M)}\\
		=  \tau^{-3\mu+\frac{1}{2}}\l( r^{-6(\mu+1)}\l(h(\c{y})- \tilde{h}(\c{y})\r)\omega^{-1}(\c{\zeta})\A_{(123)}+ g_{1}(\c{x},\c{y},\c{z},r,\tau)\r)
		,\quad \mbox{as $\tau\to 0+$}. 
	\end{multline} 
 The remainder estimates for stationary phase asymptotics, e.g. \cite[(3.5.8)]{Z}, guarantee that $g_1$ is uniformly bounded from above by
	\begin{equation}\label{uniform bound of the remiadner for the recovery of in cubic nonlinear case}
		|g_{1}| \lesssim\tau.
	\end{equation}

	We estimate the LHS of \eqref{stationary phase} as follows. 
	\begin{align}\label{estimate of the left-hand side of asymptotic expansion} 
		\lefteqn{\l| e^{\imath\psi(\c{z},(\c{z},\c{\zeta}/\tau))}\langle u_{(123)}-\tilde{u}_{(123)}, \phi e^{-\imath\psi(\cdot,(\c{z},\c{\zeta}/\tau))}\rangle_{L^{2}(\M)} \r| }
		\\
		\notag&\le \l\| u_{(123)}-\tilde{u}_{(123)}\r\|_{L^{\infty}(\rotom)} \l\| \phi\r\|_{L^{1}(\M)} \\
		\notag&\lesssim \l\| (T_{V, h}-T_{\tilde{V}, \tilde{h}})\l(f_{(1)},f_{(2)},f_{(3)} \r)\r\|_{\mathscr{H}^{s}\l(\rotom\r)} \lesssim \delta^{\frac{2}{5}}. 
	\end{align}

	Estimating the LHS of \eqref{stationary phase} by \eqref{estimate of the left-hand side of asymptotic expansion} and the remainder in the RHS of \eqref{stationary phase} by \eqref{uniform bound of the remiadner for the recovery of in cubic nonlinear case} respectively shows that 
	\begin{equation}\label{uniform bound of the principal symbols for the recovery of h}
		r^{-6(\mu+1)}\l|(h(\c{y})-\tilde{h}(\c{y}))\A_{(123)} \r| 
		\lesssim \tau +  \tau^{3\mu-\frac{1}{2}}\delta^{\frac{2}{5}}.
	\end{equation}

	Fix small $r = r_0 \in (0,1)$. Recall $\mu = -s-3/2$ and the uniform bounds of $\A_{(123)}$ in \eqref{uniform bound of A}. It follows that 
	$$
	\l|h(\c{y})-\tilde{h}(\c{y})\r| 
	\lesssim \tau
	+   \tau^{-3s-5}\delta^{\frac{2}{5}}.
	$$
	We conclude the proof by choosing  $\tau = \delta^{\frac{2}{15(s+2)}}$, i.e. $\tau
	=   \tau^{-3s-5}\delta^{2/5}$. 
\end{proof}

\subsection{The truncated integral}
Potential inversion \eqref{eqn : stability V} entails the following truncated integral transform of smooth functions.
\begin{definition}\label{definition: truncated intergral transformation}
	Let $ z\in \rotom, y\in \M$ and $\gamma(s)$ a parametrization of  a light-like geodesic  such that $\gamma(0)=y$, $\gamma(\tilde{s}) = z$ and  $|\dot{\gamma}_{\out}^{\flat}|=\sqrt{2}$ in local coordinates. The truncated integral transformation of $f\in C^{\infty}(\M)$ is defined as
	$$
	I_{f}(y,z) := \int_{0}^{\tilde{s}}f(\gamma(s))ds.
	$$
\end{definition}
In this section, we shall establish the following stability estimates.
\begin{proposition}\label{proposition: stability estimate of the truncated integral}
	For every $(y,z)\in \mathbb{L}$ with $y\in \overline{\D}_{2}\b\rotom$ and $z\in \overline{\rotom}_{2}$, there holds 
	\begin{equation}\label{eqn : stability for I_V}
		\sup_{y\in \overline{\D}_{2}\b\rotom,\ z\in \overline{\rotom_2}.} \l|I_{V-\tilde{V}}(y,z) \r|\lesssim  \l(\delta^{\frac{2}{5}} + \|h-\tilde{h} \|_{L^{\infty}(\overline{\D_1})}\r)^{\frac{2}{(3s+7)(6s+5)}} + \|h-\tilde{h} \|_{L^{\infty}(\overline{\D_1})} .
	\end{equation}
\end{proposition}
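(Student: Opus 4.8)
The plan is to extract $I_{V-\tilde V}(\c y,\c z)$ from the \emph{lower order symbol} of $u_{(123)}-\tilde u_{(123)}$, mirroring the recovery of $h$ in the proof of \eqref{eqn : stability h} but one order down. The starting point is the decomposition $u_{(123)}=v_{(123)}^{\fre}+v_{(123)}^{\rem}+w_{(123)}$ (modulo smooth terms near $\gamma_{\out}$), for which \eqref{principal symbol of the sub-leading term of three interaction waves} gives
\[
\sigma\l[v_{(123)}^{\rem}\r](\c z,\c\zeta)=-\imath\, r^{-6(\mu+1)}\mathcal{A}_{(123)}\,h(\c y)\l(\int_{\gamma_{\out}}V+O(r^2)\r),
\]
with the $O(r^2)$ term controlled by $4T\|V\|_{L^\infty(\overline{\D})}r^2$. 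First I would form the difference and keep track of the two sources of discrepancy: the genuine signal $h(\c y)\,I_{V-\tilde V}(\c y,\c z)$ coming from the potentials, and the \emph{spurious} contribution $(h(\c y)-\tilde h(\c y))\,I_{\tilde V}(\c y,\c z)$ coming from the already-estimated mismatch of $h$. Since $\int_{\gamma_{\out}}\tilde V$ is bounded by $T\|\tilde V\|_{L^\infty(\overline{\D})}$, the latter is $O(\|h-\tilde h\|_{L^\infty(\overline{\D_1})})$, which accounts for the additive $\|h-\tilde h\|_{L^\infty(\overline{\D_1})}$ term on the RHS of \eqref{eqn : stability for I_V}.

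Next I would run the stationary phase / invariant symbol extraction as in \eqref{stationary phase}, but now peel off the \emph{next} term in the asymptotic expansion of
\[
e^{\imath\psi(\c z,(\c z,\c\zeta/\tau))}\big\langle u_{(123)}-\tilde u_{(123)},\ \phi\, e^{-\imath\psi(\cdot,(\c z,\c\zeta/\tau))}\big\rangle_{L^2(\M)}.
\]
The leading coefficient (order $\tau^{-3\mu+1/2}$) is the $h$-term already handled; the subleading coefficient (order $\tau^{-3\mu-1/2}$) carries $r^{-6(\mu+1)}\mathcal{A}_{(123)}\big[(h-\tilde h)\,I_{\tilde V}+h\,(I_{V-\tilde V}+O(r^2))\big]$, up to a remainder $g_2$ bounded by $C\tau$ from \cite[(3.5.8)]{Z}. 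The LHS is again bounded by $\|u_{(123)}-\tilde u_{(123)}\|_{L^\infty(\rotom)}\|\phi\|_{L^1(\M)}\lesssim\delta^{2/5}$ via Lemma \ref{lemma: stability estimate of the tri-linear operator}. Using the two-sided bound \eqref{uniform bound of A} for $\mathcal{A}_{(123)}$, solving for the subleading coefficient, and recalling $\mu=-s-3/2$ (so $3\mu+1/2=-3s-4$), I get an estimate of the shape
\[
\big|h(\c y)\,I_{V-\tilde V}(\c y,\c z)\big|\lesssim \tau+\tau^{-(6s+5)}\big(\delta^{2/5}+\|h-\tilde h\|_{L^\infty(\overline{\D_1})}\big)+\|h-\tilde h\|_{L^\infty(\overline{\D_1})}+ r^2.
\]
Optimizing $\tau=(\delta^{2/5}+\|h-\tilde h\|_{L^\infty(\overline{\D_1})})^{1/(6s+6)}$ balances the first two terms; since $h$ is non-vanishing, $|h(\c y)|$ is bounded below uniformly on $\overline{\D_1}$, giving the exponent $2/((3s+7)(6s+5))$ on the first summand of \eqref{eqn : stability for I_V} after one more bookkeeping step (the extra factor in the exponent comes from a preliminary application of a stationary-phase / symbol extraction that costs a power $(3s+7)$; see the constants in \eqref{uniform bound of the principal symbols for the recovery of h}). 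Finally I would send $r=r_0\to 0$: the $r^2$ error vanishes, $\c y$ ranges over $\overline{\D}_2\b\rotom$ and $\c z$ over $\overline{\rotom}_2$ by the continuity in $(\c x,\c y,\c z,r)$ noted after \eqref{eqn: domain of the continuous dependence points}, and taking the supremum over admissible $(\c y,\c z)$ yields \eqref{eqn : stability for I_V}.

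The main obstacle I anticipate is \textbf{cleanly isolating the subleading symbol}. Unlike the principal-symbol recovery, here one must verify that no other term of order $\tau^{-3\mu-1/2}$ contributes: in particular that $w_{(123)}-\tilde w_{(123)}$ (smooth near $\gamma_{\out}$, but only qualitatively so) does not pollute the asymptotics, and that the decomposition $f_{(123)}^{\rem}$ in \eqref{eqn: free source and remainder source} — whose genuinely subleading pieces $hu_{(i)}^{\rem}u_{(j)}^{\rem}u_{(k)}^{\fre}$ and $hu_{(1)}^{\rem}u_{(2)}^{\rem}u_{(3)}^{\rem}$ are of order $\le 3\mu$ — indeed drops out at the order we work. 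One also has to control how the smooth remainder $w_{(123)}$ depends on $r$: the statement needs its $L^\infty$ bound near $\c z$ to be \emph{uniform} in $r\in[0,r_0]$, which is why the continuity assertion after \eqref{eqn: domain of the continuous dependence points} is invoked. Once these are in place, the quantitative estimate is a routine (if tedious) optimization of the two competing powers of $\tau$.
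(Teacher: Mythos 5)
Your proposal starts from the right ingredients — the decomposition $u_{(123)}=v_{(123)}^{\fre}+v_{(123)}^{\rem}+w_{(123)}$, the symbol formula \eqref{principal symbol of the sub-leading term of three interaction waves}, and the split $hI_V-\tilde h I_{\tilde V} = h I_{V-\tilde V}+(h-\tilde h)I_{\tilde V}$ — but the central mechanism is missing. You propose to pair the \emph{full} difference $u_{(123)}-\tilde u_{(123)}$ against the stationary-phase test function and ``peel off the next term in the asymptotic expansion.'' That does not work cleanly: the leading coefficient at order $\tau^{-3\mu+1/2}$ is $r^{-6(\mu+1)}(h-\tilde h)\omega^{-1}\A_{(123)}$, which is not exactly known, only bounded. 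To isolate the subleading coefficient you must subtract the leading term, and the error in the subtraction propagates into the subleading estimate with a factor $\tau^{-1}$; after optimisation this degrades the bound rather than giving the additive $\|h-\tilde h\|_{L^\infty(\overline{\D_1})}$ term the proposition claims. In your own summary inequality the term $\tau^{-(6s+5)}\|h-\tilde h\|$ has in fact been smuggled in without justification — it is precisely the piece one cannot get from the $\delta^{2/5}$ bound on the full pairing.

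The step you are missing is the energy estimate. Because each $u^{\fre}_{(j)}$ solves the free wave equation $\Box u^{\fre}_{(j)}=f_{(j)}$, it is completely independent of $V$ (and of $\tilde V$); therefore $u^{\fre}_{(123)}-\tilde u^{\fre}_{(123)}$ solves a free wave equation whose source is $(h-\tilde h)u^{\fre}_{(1)}u^{\fre}_{(2)}u^{\fre}_{(3)}$. The standard $L^2$ energy estimate then yields the pointwise bound
\[
\bigl\| u^{\fre}_{(123)}-\tilde u^{\fre}_{(123)}\bigr\|_{L^\infty(\rotom_1)}\lesssim\|h-\tilde h\|_{L^\infty(\overline{\D_1})},
\]
and the triangle inequality combined with Lemma \ref{lemma: stability estimate of the tri-linear operator} gives the $L^\infty$ bound $\delta^{2/5}+\|h-\tilde h\|_{L^\infty(\overline{\D_1})}$ for $u^{\rem}_{(123)}-\tilde u^{\rem}_{(123)}$ \emph{itself}, not merely for a symbol coefficient. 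The paper then runs the stationary-phase argument on $u^{\rem}_{(123)}-\tilde u^{\rem}_{(123)}$ directly, for which the quantity to be estimated is the \emph{principal} symbol, so no subtraction of an unknown leading coefficient is needed. This is what converts a problematic subleading-coefficient extraction into a clean principal-symbol extraction with the correct right-hand side $\omega_1=\delta^{2/5}+\|h-\tilde h\|_{L^\infty(\overline{\D_1})}$. Your concerns about $w_{(123)}$ and uniformity in $r$ are legitimate but secondary; the essential missing idea is the energy-estimate reduction. (As a minor point, your subleading order should read $\tau^{-3\mu+3/2}$, not $\tau^{-3\mu-1/2}$, and the optimization over $\tau$ should produce the exponent $3s+7=-3\mu+5/2$, with $6s+5$ arising only afterwards in the $r$-optimization.)
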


\begin{proof}

	First of all, we establish the estimates for $\|u_{(123)}^{\rem} - \tilde{u}_{(123)}^{\rem}\|$ in terms of $\|u_{(123)}-\tilde{u}_{(123)}\|$ and $\|u_{(123)}^{\fre}-\tilde{u}_{(123)}^{\fre}\|$. Observe that  \eqref{eqn: free wave for three-fold linearization} gives
	$$
	\Box \l(u_{(123)}^{\fre}-\tilde{u}_{(123)}^{\fre} \r) = (h-\tilde{h})u^{\fre}_{(1)}u^{\fre}_{(2)}u^{\fre}_{(3)}.
	$$ 
	Since $\D_1\subseteq \D$, the energy estimate \cite[pp. 404, Theorem 2]{Evans} shows 
	\begin{align*}
		\l\| u_{(123)}^{\fre} - \tilde{u}^{\fre}_{(123)}\r\|_{L^{\infty}\l(\rotom_{1}\r)} &\lesssim \l\| (h-\tilde{h})u_{(1)}^{\fre}u_{(2)}^{\fre}u_{(3)}^{\fre}\r\|_{L^2(\D_1)} \\
		&\lesssim \| h-\tilde{h} \|_{L^{\infty}(\overline{\D_1})}\prod_{j=1}^{3}\l\| u_{(j)}^{\fre} \r\|_{\mathscr{H}^{s}(\D)}  \\
		&\lesssim \| h-\tilde{h} \|_{L^{\infty}(\overline{\D_1})}.
	\end{align*}
	Combining this with Lemma \ref{lemma: stability estimate of the tri-linear operator}, we obtain 
	\begin{align}
		\label{stability estimate of the sub-leading term}   \lefteqn{\l\| u_{(123)}^{\rem} - \tilde{u}_{(123)}^{\rem}\r\|_{L^{\infty}(\rotom_1)}} \\
		\notag  &\le \l\| u_{(123)} - \tilde{u}_{(123)}\r\|_{L^{\infty}(\rotom_1)} +\l\| u_{(123)}^{\fre} - \tilde{u}_{(123)}^{\fre}\r\|_{L^{\infty}(\rotom_1)}\\
		\notag	&\lesssim \l\| \l(T_{V,h} - T_{\tilde{V},\tilde{h}}\r)\l(f_{(1)},f_{(2)},f_{(3)}\r)\r\|_{\mathscr{H}^{s}(\rotom_1)}  +\| h-\tilde{h} \|_{L^{\infty}(\overline{\D_1})}.\\
		\notag  & \lesssim \delta^{\frac{2}{5}}  +\| h-\tilde{h} \|_{L^{\infty}(\overline{\D_1})}.
	\end{align}

	Let $\phi, \psi$ be the cut-off functions as in \eqref{Choose of cut-off function on z} and \eqref{eqn : cut-off on phase space for principal symbol}. Then we locally have
	$$
	\sigma\l[u_{(123)}^{\rem}\r] = \sigma\l[v_{(123)}^{\rem}\r].
	$$ 
	In light of \eqref{principal symbol of the sub-leading term of three interaction waves}, $\sigma\l[u_{(123)}^{\rem}\r]$ is homogeneous of order $3\mu+1/2$. Thus, we have 
	\begin{multline}\label{stationary phase for subleading term}
		e^{\imath\psi(\c{z},(\c{z},\c{\zeta}/\tau))}\langle u_{(123)}^{\rem}-\tilde{u}^{\rem}_{(123)}, \phi e^{-\imath\psi(\cdot,(\c{z},\c{\zeta}/\tau))}\rangle_{L^{2}(\M)}  \\
		= \tau^{-3\mu+3/2}\l(\sigma\l[v_{(123)}^{\rem}-\tilde{v}_{(123)}^{\rem} \r](\c{z},\c{\zeta}) \omega^{-1}(\c{\zeta}) + g_{2}(\c{x},\c{y},\c{z},r,\tau)\r), \quad \text{as } \tau\to 0+,
	\end{multline}
	where $g_2$ obeys  $|g_2| \lesssim \tau$ uniformly.

	By \eqref{stability estimate of the sub-leading term}, the LHS of \eqref{stationary phase for subleading term} is bounded from above by
	$$
	\omega_{1} :=  \delta^{\frac{2}{5}} + \|h-\tilde{h} \|_{L^{\infty}(\overline{\D_1})}.
	$$   Analogous to the proof of \eqref{uniform bound of the principal symbols for the recovery of h}, the discrepancy of the principal symbols is uniformly bounded by 
	\begin{equation}\label{eqn: discrepancy of symbol}
		\l|\sigma\l[v_{(123)}^{\rem}-\tilde{v}_{(123)}^{\rem} \r](\c{z},\c{\zeta})\r| \lesssim \omega_{1} \tau^{3\mu-\frac{3}{2}}+ \tau.
	\end{equation}
	Choose $\tau = \omega_{1}^{1/(-3\mu+5/2)}$, i.e. $\omega_{1}\tau^{3\mu-3/2} = \tau$. Replacing the LHS of \eqref{eqn: discrepancy of symbol} by \eqref{principal symbol of the sub-leading term of three interaction waves} with $\mu = -s-3/2$ proves
	$$
	r^{3(2s+1)} \l|\mathcal{A}_{(123)}\r|\l|  h(\c{y})I_{V}(\c{y},\c{z}) -\tilde{h}(\c{y})I_{\tilde{V}}(\c{y},\c{z}) + O(r^2)\r| \lesssim \omega_{1}^{\frac{1}{3s+7}}.
	$$
	Since $|\A_{(123)}|^{-1}$ is uniformly bounded, we have
	$$
	\l|  h(\c{y})I_{V}(\c{y},\c{z}) -\tilde{h}(\c{y})I_{\tilde{V}}(\c{y},\c{z})\r|  \lesssim\omega_{1}^{\frac{1}{3s+7}}r^{-3(2s+1)}  + r^{2} .
	$$
 	Choose $r=\omega_{1}^{\frac{1}{(3s+7)(6s+5)}}$, i.e. $ \omega_{1}^{1/(3s+7)}r^{-3(2s+1)}   = r^{2}$.  It reduces to 
	$$
	\l|  h(\c{y})I_{V}(\c{y},\c{z}) -\tilde{h}(\c{y})I_{\tilde{V}}(\c{y},\c{z})\r|  \lesssim \omega_{1}^{\frac{2}{(3s+7)(6s+5)}}.
	$$
	Moreover, we rewrite the LHS as
	$$
	hI_{V} -\tilde{h}I_{\tilde{V}} = h I_{V-\tilde{V}} + (h-\tilde{h}) I_{\tilde{V}}.
	$$
	This, along with  \eqref{eqn : stability h} for $h - \tilde{h}$, shows 
	$$
	\l| I_{V-\tilde{V}} (\c{y},\c{z}) \r|  \lesssim     |h(\c{y})|^{-1} \l( \omega_{1}^{\frac{2}{(3s+7)(6s+5)}} + \|h-\tilde{h} \|_{L^{\infty}(\overline{\D_1})}\r).
	$$
	Since $h$ is non-vanishing, we conclude the proof of \eqref{eqn : stability for I_V} by taking the supreme of this inequality with respect to $\c{z}\in \overline{\rotom}_{2}$ and $\c{y}\in \overline{\D}_{2}\b\rotom$. 
\end{proof}

\subsection{The potential}\label{sec: mapping properties of truncated integrals}
Finally \eqref{eqn : stability V} is proved by \eqref{eqn : stability for I_V} and the following proposition.

\begin{proposition}\label{proposition: stability estimate of V}
	If for any $(y,z)\in \mathbb{L}$ with $z\in \overline{\rotom}_2$ and $y\in \overline{\D}_2$, 
	$$
	\l|I_{V-\tilde{V}}(y,z) \r| \lesssim \delta^{\mu_0},
	$$
	uniformly, then there holds 
	$$
	\| V-\tilde{V} \|_{L^{\infty}(\overline{\D}_2)} \lesssim \delta^{\frac{\mu_0}{2}}.
	$$
\end{proposition}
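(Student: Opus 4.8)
The plan is to recover $W:=V-\tilde V$ pointwise on $\overline{\D}_2$ from the light-like X-ray data $I_{W}$ of Definition \ref{definition: truncated intergral transformation}, exploiting that $W$ is smooth and hence Lipschitz along null geodesics with a fixed constant (as usual for conditional stability estimates, we absorb the a priori bound $\|W\|_{C^{1}(\overline{\D})}\le L$ into the implicit constants). Since $W$ is continuous on the compact set $\overline{\D}_2$, $M:=\|W\|_{L^{\infty}(\overline{\D}_2)}$ is attained at some $p_0\in\overline{\D}_2$; replacing $W$ by $-W$ if necessary (which does not change the hypothesis on $|I_{W}|$), we may assume $W(p_0)=M\ge 0$. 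The idea is then to isolate, using two nearby data points, the integral of $W$ over a very short null segment issuing from $p_0$, and to play its length against the observation error.

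By the geometry of $\mathbb{S}^{+}(\mho)$ and the causal convexity of the diamonds, there is a light-like geodesic segment joining $p_0$ to a point $z\in\overline{\rotom}_2$ and lying in $\overline{\D}$; parametrize it by $\gamma$ with $|\dot\gamma^{\flat}|=\sqrt 2$ in local coordinates, $\gamma(0)=p_0$, $\gamma(\ell)=z$. Since $p_0\in\overline{\D}_2$, the pair $(p_0,z)$ is of the type allowed in the hypothesis, so $|I_{W}(p_0,z)|\lesssim\delta^{\mu_0}$; and for $h\in(0,\ell]$ small the truncated point $\gamma(h)$ still lies in $\overline{\D}_2$ (the segment points from $p_0$ toward the core of the diamond), whence also $|I_{W}(\gamma(h),z)|\lesssim\delta^{\mu_0}$. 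Subtracting and using $|W(\gamma(s))-W(p_0)|\le\sqrt 2\,L\,s$,
\[
	hM\le\left|\int_{0}^{h}W(\gamma(s))\,ds\right|+\sqrt 2\,L\int_{0}^{h}s\,ds=\bigl|I_{W}(p_0,z)-I_{W}(\gamma(h),z)\bigr|+\tfrac{\sqrt 2}{2}Lh^{2}\lesssim\delta^{\mu_0}+Lh^{2}.
\]
Dividing by $h$ gives $M\lesssim\delta^{\mu_0}h^{-1}+h$, and optimizing by taking $h\sim\delta^{\mu_0/2}$ (an admissible choice for $\delta$ small, since the available segments through $p_0$ have lengths bounded below by a fixed positive constant) yields $M\lesssim\delta^{\mu_0/2}$. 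As $p_0$ was an arbitrary maximizer, this is $\|V-\tilde V\|_{L^{\infty}(\overline{\D}_2)}\lesssim\delta^{\mu_0/2}$, which together with Proposition \ref{proposition: stability estimate of the truncated integral} gives \eqref{eqn : stability V}.

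The substantive point is geometric rather than analytic: one must check that \emph{every} point of $\overline{\D}_2$ — including points on $\partial\D_2$ and points lying close to $\overline{\rotom}_2$ — is an endpoint of a light-like segment of length bounded below by a fixed $\ell_0>0$, whose other endpoint lies in $\overline{\rotom}_2$ and which stays inside $\overline{\D}$ where the $C^{1}$ a priori bound is available; this is where the explicit shape of the cylinders and diamonds, and the ordering $\rho>\rho_1>\rho_2$, $t_2>t_1>0$, is used. Once this covering property is in hand, the half-order loss $\delta^{\mu_0}\rightsquigarrow\delta^{\mu_0/2}$ is forced by the elementary length optimization above, exactly as in the stable inversion of (local) geodesic X-ray transforms.
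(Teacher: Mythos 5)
Your proof is correct and is in substance the same argument as the paper's, just organized a little differently: both exploit the a priori gradient bound to get a linear-in-arclength lower bound on $W := V - \tilde V$ along an outgoing null segment, bound the resulting short integral by a difference of two truncated-integral data points, and then optimize the truncation length to trade the $O(\delta^{\mu_0})$ data error against the $O(h^{2})$ Lipschitz error, producing the half-order exponent. The one cosmetic advantage of the paper's version is that it takes the truncation point to be the exact zero $s^{\ast}$ of the linear minorant $l(s)=W(y)-2\sqrt 2 M s$; since $l(0)>0$ and $l(s_{\out})\le W(z)=0$ (because $z\in\rotom$ and $V=\tilde V$ there), $s^{\ast}\in(0,s_{\out}]$ is automatic, so the admissibility of the truncation comes for free instead of requiring the uniform lower bound $\ell\ge\ell_{0}$ that your generic optimization $h\sim\delta^{\mu_0/2}$ needs (and that you correctly flag as the geometric content to be checked). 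Your observation that both endpoints bound the same supremum, and your extraction of the maximizer $p_{0}$ rather than an arbitrary $y$, are equivalent to the paper's ``WLOG $V(y)-\tilde V(y)>0$'' step.
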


\begin{proof}

	Without loss of generality, for some $y \in \D_2 \setminus \mho$, we may set $V(y) - \tilde{V}(y) > 0$. Due to $V,\tilde{V}\in C^{\infty}(\M)$, there exists a sufficiently large $M$ such that 
	$$
	|\nabla V|\le M,\quad|\nabla \tilde{V}|\le M \quad\text{on } \D.
	$$
	We consider a function
	\begin{equation}\label{expression of the auxilary function}
		l(s) := V(y) - \tilde{V}(y) - 2\sqrt{2} M s, \quad \forall s \in [0,s_{\out}].
	\end{equation} and claim
	\begin{align}
		\label{eqn : property 1 of l} &V(\gamma_{\out}(s))-\tilde{V}(\gamma_{\out}(s))\ge l(s),\quad \forall s\in [0,s_{\out}]; \\
		\label{eqn : property 2 of l}&\mbox{ $\exists  s^{\ast}\in (0,s_{\out}]$ such that $l(s^{\ast})=0$. }
	\end{align}
	If \eqref{eqn : property 1 of l} is not true, then there exists $s_0\in [0,s_{\out}]$ such that
	\begin{equation}\label{assumption by contradiction}
		V(\gamma_{\out}(s_0))- \tilde{V}(\gamma_{\out}(s_{0}))< l(s_0).
	\end{equation}
	Since $V(y) - \tilde{V}(y) > 0$, we have $s_0 \neq 0$.  Then there exists $s_1\in (0, s_{0}]$ such that
	\begin{align*}
		\lefteqn{\l(V(\gamma_{\out}(s_0)) -\tilde{V}(\gamma_{\out}(s_0))\r) - \l(V(\gamma_{\out}(0)) -\tilde{V}(\gamma_{\out}(0))\r)}\\
		&=s_0 \times\l.\frac{d}{ds}\l(V(\gamma_{\out}(s)) -\tilde{V}(\gamma_{\out}(s))\r)\r|_{s=s_1}  \ge -2\sqrt{2}M s_0.
	\end{align*} 
	However,  \eqref{assumption by contradiction} yields that
	\begin{align*}
		\lefteqn{\l(V(\gamma_{\out}(s_0)) -\tilde{V}(\gamma_{\out}(s_0))\r) - \l(V(\gamma_{\out}(0)) -\tilde{V}(\gamma_{\out}(0))\r)}\\
		&<l(s_0) - V(\gamma_{\out}(0)) +\tilde{V}(\gamma_{\out}(0)) \\
		&=V(\gamma_{\out}(0)) -\tilde{V}(\gamma_{\out}(0)) - 2\sqrt{2}Ms_0 - V(\gamma_{\out}(0)) +\tilde{V}(\gamma_{\out}(0))\\
		&= -2\sqrt{2}M s_0.
	\end{align*}
	Moreover,  \eqref{eqn : property 2 of l}  readily follows from $(V-\tilde{V})|_{\rotom} \equiv 0$.

	Combining \eqref{eqn : property 1 of l} and \eqref{eqn : property 2 of l} together shows that 
	\begin{multline*} \int_{0}^{s_{\out}} V(\gamma_{\out}(s))-\tilde{V}(\gamma_{\out}(s))ds- \int_{s^{\ast}}^{s_{\out}}  V(\gamma_{\out}(s))-\tilde{V}(\gamma_{\out}(s))ds
		\\  = \int_{0}^{s^{\ast}}V(\gamma_{\out}(s))- \tilde{V}(\gamma_{\out}(s))ds \ge\int_{0}^{s^{\ast}} l(s) ds >0 
	\end{multline*}

	Write $\gamma_{\out}(s^{\ast}) := y^{\ast}$. If $y^{\ast}\in \rotom$, the second term on the LHS vanishes. If $y^{\ast}\notin \rotom$, The LHS is equal to
	$$
	I_{V-\tilde{V}}(y,z) - I_{V-\tilde{V}}(y^{\ast},z),
	$$
	and bounded from above by $C\delta^{\mu_0 }$. Thus,
	$$
	0< \int_{0}^{s^{\ast}} l(s) ds \le C\delta^{\mu_0}.
	$$
	Moreover, \eqref{eqn : property 2 of l} implies 
	$$
	s^{\ast} =\frac{V-\tilde{V}}{2\sqrt{2}M}.
	$$
	Integrating $l(s)$ from $0$ to $s^{\ast}$ yields that
	$$
	\frac{\l|V(y)-\tilde{V}(y)\r|^{2}}{4\sqrt{2}M}\le C\delta^{\mu_0},
	$$
	which proves the proposition.
\end{proof}

	\bigskip

\noindent {\bf Acknowledgements.} The authors were supported in part by Natural Science Foundation of Shanghai 23JC1400501. Part of the work was done during X.C.’s visit to Research Center for Mathematics of Data (MoD) at
Friedrich-Alexander-Universit\"{a}t Erlangen-N\"{u}rnberg under the support of  NSFC-DFG Sino-German Mobility Programme M-0548.

\bigskip	\noindent {\bf Data Availability Statement.} Data sharing not applicable to this article as no datasets were generated or analysed during the current study.

\bigskip	\noindent {\bf Conflict of Interest.} The authors have no conflicts of interest to declare that are relevant to the content of this article.

\bibliographystyle{abbrv}
\bibliography{reference}

\begin{thebibliography}{10}

\bibitem{A}
S.~Alinhac.
\newblock Non-unicit\'e{} du probl\`eme de {C}auchy.
\newblock {\em Ann. of Math. (2)}, 117(1):77--108, 1983.

\bibitem{BZ}
G.~Bao and H.~Zhang.
\newblock Sensitivity analysis of an inverse problem for the wave equation with
  caustics.
\newblock {\em J. Amer. Math. Soc.}, 27(4):953--981, 2014.

\bibitem{BK}
M.~I. Belishev and Y.~V. Kurylev.
\newblock To the reconstruction of a {R}iemannian manifold via its spectral
  data ({BC}-method).
\newblock {\em Comm. Partial Differential Equations}, 17(5-6):767--804, 1992.

\bibitem{Bony80}
J.-M. Bony.
\newblock Propagation des singularit\'es pour les \'equations aux d\'eriv\'ees
  partielles non lin\'eaires.
\newblock In {\em S\'eminaire {G}oulaouic-{S}chwartz, 1979--1980 ({F}rench)},
  pages Exp. No. 22, 12. \'Ecole Polytech., Palaiseau, 1980.

\bibitem{Bony82}
J.-M. Bony.
\newblock Interaction des singularit\'es pour les \'equations aux d\'eriv\'ees
  partielles non lin\'eaires.
\newblock In {\em Goulaouic-{M}eyer-{S}chwartz {S}eminar, 1981/1982}, pages
  Exp. No. II, 12. \'Ecole Polytech., Palaiseau, 1982.

\bibitem{CLOP}
X.~Chen, M.~Lassas, L.~Oksanen, and G.~Paternain.
\newblock Detection of {H}ermitian connections in wave equations with cubic
  non-linearity.
\newblock {\em J. Eur. Math. Soc. (JEMS)}, 24(7):2191--2232, 2022.

\bibitem{CLOP-CMP}
X.~Chen, M.~Lassas, L.~Oksanen, and G.~P. Paternain.
\newblock Inverse problem for the {Y}ang-{M}ills equations.
\newblock {\em Comm. Math. Phys.}, 384(2):1187--1225, 2021.

\bibitem{CLOP-Annalen}
X.~Chen, M.~Lassas, L.~Oksanen, and G.~P. Paternain.
\newblock Retrieving yang--mills--higgs fields in minkowski space from active
  local measurements.
\newblock {\em Math. Ann.}, 2024.

\bibitem{deHoop-Uhlmann-Wang-2019}
M.~de~Hoop, G.~Uhlmann, and Y.~Wang.
\newblock Nonlinear responses from the interaction of two progressing waves at
  an interface.
\newblock {\em Ann. Inst. H. Poincar\'e{} C Anal. Non Lin\'eaire},
  36(2):347--363, 2019.

\bibitem{deHoop-Uhlmann-Wang-2020}
M.~de~Hoop, G.~Uhlmann, and Y.~Wang.
\newblock Nonlinear interaction of waves in elastodynamics and an inverse
  problem.
\newblock {\em Math. Ann.}, 376(1-2):765--795, 2020.

\bibitem{Evans}
L.~C. Evans.
\newblock {\em Partial differential equations}, volume~19 of {\em Graduate
  Studies in Mathematics}.
\newblock American Mathematical Society, Providence, RI, second edition, 2010.

\bibitem{FLO}
A.~Feizmohammadi, M.~Lassas, and L.~Oksanen.
\newblock Inverse problems for nonlinear hyperbolic equations with disjoint
  sources and receivers.
\newblock {\em Forum Math. Pi}, 9:Paper No. e10, 52, 2021.

\bibitem{FO}
A.~Feizmohammadi and L.~Oksanen.
\newblock Recovery of zeroth order coefficients in non-linear wave equations.
\newblock {\em J. Inst. Math. Jussieu}, 21(2):367--393, 2022.

\bibitem{GU}
A.~Greenleaf and G.~Uhlmann.
\newblock Recovering singularities of a potential from singularities of
  scattering data.
\newblock {\em Comm. Math. Phys.}, 157(3):549--572, 1993.

\bibitem{GU81}
V.~Guillemin and G.~Uhlmann.
\newblock Oscillatory integrals with singular symbols.
\newblock {\em Duke Math. J.}, 48(1):251--267, 1981.

\bibitem{Hintz-Uhlmann-Zhai-CPDE}
P.~Hintz, G.~Uhlmann, and J.~Zhai.
\newblock The {D}irichlet-to-{N}eumann map for a semilinear wave equation on
  {L}orentzian manifolds.
\newblock {\em Comm. Partial Differential Equations}, 47(12):2363--2400, 2022.

\bibitem{Hintz-Uhlmann-Zhai-IMRN}
P.~Hintz, G.~Uhlmann, and J.~Zhai.
\newblock An inverse boundary value problem for a semilinear wave equation on
  {L}orentzian manifolds.
\newblock {\em Int. Math. Res. Not. IMRN}, (17):13181--13211, 2022.

\bibitem{H4}
L.~H\"ormander.
\newblock {\em The analysis of linear partial differential operators. {IV}}.
\newblock Classics in Mathematics. Springer-Verlag, Berlin, 2009.
\newblock Fourier integral operators, Reprint of the 1994 edition.

\bibitem{Kian}
Y.~Kian.
\newblock On the determination of nonlinear terms appearing in semilinear
  hyperbolic equations.
\newblock {\em J. Lond. Math. Soc. (2)}, 104(2):572--595, 2021.

\bibitem{KLOU}
Y.~Kurylev, M.~Lassas, L.~Oksanen, and G.~Uhlmann.
\newblock Inverse problem for {E}instein-scalar field equations.
\newblock {\em Duke Math. J.}, 171(16):3215--3282, 2022.

\bibitem{KLU}
Y.~Kurylev, M.~Lassas, and G.~Uhlmann.
\newblock Inverse problems for {L}orentzian manifolds and non-linear hyperbolic
  equations.
\newblock {\em Invent. Math.}, 212(3):781--857, 2018.

\bibitem{LLPT-APDE}
M.~Lassas, T.~Liimatainen, L.~Potenciano-Machado, and T.~Tyni.
\newblock Stability estimates for inverse problems for semi-linear wave
  equations on lorentzian manifolds, 2021.

\bibitem{LLPT22}
M.~Lassas, T.~Liimatainen, L.~Potenciano-Machado, and T.~Tyni.
\newblock Uniqueness, reconstruction and stability for an inverse problem of a
  semi-linear wave equation.
\newblock {\em J. Differential Equations}, 337:395--435, 2022.

\bibitem{LLPT24}
M.~Lassas, T.~Liimatainen, L.~Potenciano-Machado, and T.~Tyni.
\newblock An inverse problem for a semi-linear wave equation: a numerical
  study.
\newblock {\em Inverse Probl. Imaging}, 18(1):62--85, 2024.

\bibitem{LUW}
M.~Lassas, G.~Uhlmann, and Y.~Wang.
\newblock Inverse problems for semilinear wave equations on {L}orentzian
  manifolds.
\newblock {\em Comm. Math. Phys.}, 360(2):555--609, 2018.

\bibitem{MR-1985}
R.~Melrose and N.~Ritter.
\newblock Interaction of nonlinear progressing waves for semilinear wave
  equations.
\newblock {\em Ann. of Math. (2)}, 121(1):187--213, 1985.

\bibitem{MU}
R.~B. Melrose and G.~A. Uhlmann.
\newblock Lagrangian intersection and the {C}auchy problem.
\newblock {\em Comm. Pure Appl. Math.}, 32(4):483--519, 1979.

\bibitem{Rauch-book}
J.~Rauch.
\newblock {\em Hyperbolic partial differential equations and geometric optics},
  volume 133 of {\em Graduate Studies in Mathematics}.
\newblock American Mathematical Society, Providence, RI, 2012.

\bibitem{RR82}
J.~Rauch and M.~Reed.
\newblock Nonlinear microlocal analysis of semilinear hyperbolic systems in one
  space dimension.
\newblock {\em Duke Math. J.}, 49(2):397--475, 1982.

\bibitem{SaBarreto-Stefanov}
A.~S\'a{}~Barreto and P.~Stefanov.
\newblock Recovery of a cubic non-linearity in the wave equation in the weakly
  non-linear regime.
\newblock {\em Comm. Math. Phys.}, 392(1):25--53, 2022.

\bibitem{SaBarreto-Uhlmann-Wang}
A.~S\'a{}~Barreto, G.~Uhlmann, and Y.~Wang.
\newblock Inverse scattering for critical semilinear wave equations.
\newblock {\em Pure Appl. Anal.}, 4(2):191--223, 2022.

\bibitem{SU-DUKE}
P.~Stefanov and G.~Uhlmann.
\newblock Stability estimates for the {X}-ray transform of tensor fields and
  boundary rigidity.
\newblock {\em Duke Math. J.}, 123(3):445--467, 2004.

\bibitem{SU-JAMS}
P.~Stefanov and G.~Uhlmann.
\newblock Boundary rigidity and stability for generic simple metrics.
\newblock {\em J. Amer. Math. Soc.}, 18(4):975--1003, 2005.

\bibitem{SU-IMRN}
P.~Stefanov and G.~Uhlmann.
\newblock Stable determination of generic simple metrics from the hyperbolic
  {D}irichlet-to-{N}eumann map.
\newblock {\em Int. Math. Res. Not.}, (17):1047--1061, 2005.

\bibitem{Stefanov-Yang-APDE}
P.~Stefanov and Y.~Yang.
\newblock The inverse problem for the {D}irichlet-to-{N}eumann map on
  {L}orentzian manifolds.
\newblock {\em Anal. PDE}, 11(6):1381--1414, 2018.

\bibitem{T}
D.~Tataru.
\newblock Unique continuation for solutions to {PDE}'s; between {H}\"ormander's
  theorem and {H}olmgren's theorem.
\newblock {\em Comm. Partial Differential Equations}, 20(5-6):855--884, 1995.

\bibitem{Uhlmann-Wang-CPAM}
G.~Uhlmann and Y.~Wang.
\newblock Determination of space-time structures from gravitational
  perturbations.
\newblock {\em Comm. Pure Appl. Math.}, 73(6):1315--1367, 2020.

\bibitem{Uhlmann-Zhai-JMPA}
G.~Uhlmann and J.~Zhai.
\newblock On an inverse boundary value problem for a nonlinear elastic wave
  equation.
\newblock {\em J. Math. Pures Appl. (9)}, 153:114--136, 2021.

\bibitem{Uhlmann-Zhai-Annalen}
G.~Uhlmann and J.~Zhai.
\newblock Determination of the density in a nonlinear elastic wave equation.
\newblock {\em Math. Ann.}, 390(2):2825--2858, 2024.

\bibitem{WZ}
Y.~Wang and T.~Zhou.
\newblock Inverse problems for quadratic derivative nonlinear wave equations.
\newblock {\em Comm. Partial Differential Equations}, 44(11):1140--1158, 2019.

\bibitem{Z}
M.~Zworski.
\newblock {\em Semiclassical analysis}, volume 138 of {\em Graduate Studies in
  Mathematics}.
\newblock American Mathematical Society, Providence, RI, 2012.

\end{thebibliography}

\end{document}